\newtheorem{theorem}{Theorem}
\newtheorem{lemma}[theorem]{Lemma}
\newtheorem{claim}[theorem]{Claim}
\newtheorem{proposition}[theorem]{Proposition}
\theoremstyle{definition}
\newtheorem{remark}[theorem]{Remark}
\newcommand{\calE}{{\mathcal E}}
\newcommand{\calL}{{\mathcal L}}
\newcommand{\calM}{{\mathcal M}}
\newcommand{\calO}{{\mathcal O}}
\newcommand{\calX}{{\mathcal X}}
\newcommand{\calY}{{\mathcal Y}}
\newcommand{\calQ}{{\mathcal Q}}
\newcommand{\calC}{{\mathcal C}}
\newcommand{\bbP}{{\mathbb P}}
\newcommand{\bbR}{{\mathbb R}}
\newcommand{\bbZ}{{\mathbb Z}}
\DeclareMathOperator{\Pic}{Pic}
\DeclareMathOperator{\Uncoll}{Uncoll}
\newcommand{\Qperp}[1][s]{\calQ_{#1}^\perp}
\newcommand{\Nefperp}{\operatorname{Nef}^0}
\newcommand{\CK}[1][s]{\mathcal{CK}_{#1}}
\def\geq{\geqslant}
\def\leq{\leqslant}
\def\le{\leqslant}
\def\ge{\geqslant}
\begin{document}
 
\title[Quadric cones]{Quadric cones on the boundary of the Mori cone for very general blowups of the plane}

\author{Ciro Ciliberto}
\address{Dipartimento di Matematica, Universit\`a di Roma Tor Vergata, Via O. Raimondo
 00173 Roma, Italia}
\email{cilibert@axp.mat.uniroma2.it}

\author{Rick Miranda}
\address{Department of Mathematics, Colorado State University, Fort Collins (CO), 80523,USA}
\email{rick.miranda@colostate.edu}
 
\author{Joaquim Ro\'e}
\address{Departament de Matem\`atiques, Universitat Aut\`onoma de Barcelona, 08193 Bellaterra (Barcelona), Catalunya}
\email{joaquim.roe@uab.cat}

 
\keywords{Linear systems, Mori cone, Nagata's conjecture, nef rays}
 
\maketitle

\begin{abstract} In this paper we show the existence of cones over a 8-dimensional rational spheres at the boundary of the Mori cone of the blow--up of the plane at $s\geq 13$ very general points. This gives evidence for De Fernex's strong $\Delta$--conjecture, which is known to imply Nagata's conjecture. This also implies the existence of a multitude of good and wonderful rays as defined in \cite  {CHMR13}.
\end{abstract}

\section*{Introduction} 
{
Fix a non-negative integer $s$ (usually we will assume that $s \geq 10$).
We denote by $X_s$ the blow--up of the complex projective plane
at $s$ very general points;
let $H$ be the divisor class of a general line,
let $E_i$ be the class of the exceptional divisor over the $i$-th point, {and 
let $N_s = \Pic(X_s)\otimes_{\bbZ}\bbR$,
where $\Pic(X_s)$ is the Picard group.
$N_s$ is a real vector space of dimension $s+1$
(with basis $H$ and the $E_i$).
The divisor class $L=dH-\sum_i m_i E_i$ represents the plane curves of degree $d$
having multiplicity at least $m_i$ at the $i$-th point.}

The non-negative real multiples of a nonzero vector {$L\in N_s$ is called a \emph{ray}, which we denote by $R=\langle L \rangle$}.
A ray is \emph{rational} if it contains an integral vector (in the $H,E_i$ basis).
A ray is \emph{effective} if it contains a (necessarily integral) vector representing an effective divisor class.
Neither the \emph{degree} of a ray (the coefficient of $H$)
nor the intersection number of two rays
are well--defined;
however the sign of these quantities are.
Therefore it makes sense for example to ask that a ray $R$ satisfy $\deg(R) >0$ or $R^2 > 0$;
for example, any effective ray must have non--negative degree.
By Riemann-Roch, if a rational ray has positive degree and self--intersection,
then it is effective.
A divisor or ray is \emph{nef} if it intersects all effective divisors non-negatively.

There are three cones in $N_s$ that are of interest for us.
The \emph{effective cone} is the cone generated by effective rays:
this is the cone of all finite linear combinations of effective divisor classes
with non--negative real coefficients.
The effective cone is, in general, not closed;
its closure is called the \emph{Mori} cone.
The dual of the Mori cone is the \emph{nef} cone (consisting of nef rays),
which is a closed cone as well.

Closed cones are defined by their extremal rays,
and so the identification of extremal rays for the Mori and nef cones 
is of fundamental importance.
Prior results constructing extremal rays of selfintersection zero for these cones
gave discrete examples (see \cite{CHMR13} and \cite{CMR23}) or families
of rays contained in the hyperplane orthogonal to $K_{X_s}=-3H+E_1+\dots+E_s$ (unpublished work by T.~de~Fernex for $s=10$ and by J.~C.~Ottem for $s=12$).
In this paper we will construct $9$-dimensional subsets (which are quadratic cones)
on the boundary of the Mori cone for all $s\ge 10$, and having positive intersection with $K_{X_s}$ if $s \geq 13$;
this illustrates the existence of both rational and irrational extremal rays.

A key lemma that produces extremal rays is provided by \cite[Lemma 3.8]{CHMR13},
which states that if a ray $R$ is rational, non--effective, and satisfies $\deg(R) \geq 0$ and $R^2 = 0$, then $R$ is nef, and is extremal for the Mori cone and the nef cone.
Such a ray is called \emph{good} in \cite{CHMR13};
in that paper we also defined a \emph{wonderful} ray, as one which 
is irrational and nef, with self--intersection zero.
Some wonderful rays are known;
the aforementiond examples of de Fernex and Ottem provide wonderful rays for $s=10$ and $s=12$, and the results of 
\cite{CMR23} imply the existence of wonderful rays for every $s \geq 10$.
Wonderful rays, being irrational and extremal,
prove that the Mori and nef cones are not rational polyhedral,
and provide evidence for stronger conjectures that we now describe.

Recall the canonical divisor $K_s = -3H+\sum_i E_i$ of $X_s$.
Define the \emph{de~Fernex ray} $F_s$
to be the ray generated by $\sqrt{s-1}H-\sum_{i=1}^s E_i$.
A ray $R$ is said to be \emph{de~Fernex positive, negative or orthogonal} 
according to $R\cdot F_s$ being positive, negative or null;
this terminology is parallel to a ray being $K_s$-positive, negative, or orthogonal.
The \emph{Strong $\Delta$-Conjecture}
(see \cite[Conjecture 3.10]{CHMR13})
is that if $s \geq 11$,
and $R$ is a rational de~Fernex non--positive ray of self-intersection zero,
then $R$ is not effective,
and therefore is a good ray.
(See \cite{DeF01}; there is a refinement for the $s=10$ case.)
Note that $R\cdot F_s \leq 0$ implies $R\cdot K_s>0$.

The Strong $\Delta$-Conjecture implies the Nagata Conjecture \cite{N59},
since it would imply that the \emph{Nagata ray} $\langle \sqrt{s}H-\sum_i E_i\rangle $
would be wonderful if $s$ is not a square.
It would also imply that, for de Fernex non-positive classes,
the boundary of the Mori cone is given by the classes with self--intersection zero.
Hence finding large subsets of the boundary of the Mori cone
given by such classes {is a strong measure of non-polyhedrality for the Mori cone and} provides intriguing evidence for the Strong $\Delta$-Conjecture.

De~Fernex's result in \cite{DeF01} that for $s=10$, 
all rays $R$ of self--intersection zero with $R\cdot K_{10}=0$ are nef means that 
this hyperplane section of the Mori cone does have a boundary given
by the quadratic equation $R^2=0$.
The boundary of the Mori cone of $X_{10}$ is so far unknown, but 
it is constrained by the nefness of all classes $L$ with $L^2=L\cdot K_{10}=0$, 
namely a cone over a 8-dimensional sphere.

In this paper we provide further $9$-dimensional quadratic cones
on the Mori cone boundary of $X_s$ for all $s \geq 13$, 
including some that constrain 
the de~Fernex--negative area of the cone.
We also provide a complete determination of the boundary classes orthogonal to $K_s$.

For clarity, because the points are assumed to be very general, specifying the $E_i$ is irrelevant; we  will therefore use the notation $L_d(m_1,\ldots, m_s)=dH-m_1E_1-\dots -m_sE_s$
for divisor classes in $\Pic(X_s)$, and we will use exponential notation for repeated multiplicities. Thus for example the canonical divisor can be written as $K_s=-L_3(1^s)$.
The corresponding linear system of singular curves on $\bbP^2$, sometimes identified with the projective space $\bbP(H^0(X_s,\calO_{X_{s}}(L)))$, will be denoted by
$\calL=\calL_d(m_1,\ldots, m_s)$.
Observe that $L$ is an effective class if and only if the (projective) dimension of $\calL$ is non-negative.}

Now we state the main results of this article.

\begin{theorem}\label{thm:main-kperp} 
	Let $s\ge 10$, and let $L$ be a class in $\Pic(X_s)$ with $L^2=K_s\cdot L=0$. The following are equivalent:
	\begin{enumerate}
		\item $L$ is nef.		
		\item There exist $s-10$ disjoint $(-1)$-curves $C_1, \dots, C_{s-10}$ 
such that $C_i\cdot L=0$ for each $i$.
		\item $L$ is equivalent, by the action of the Cremona-Kantor group (see \S \ref {sec:K-perp}),
{ to a multiple of $L_3(1^9,0^{s-9})$.}
	\end{enumerate}
	Moreover, for each fixed collection of $s-10$ disjoint $(-1)$-curves $C_1, \dots, C_{s-10}$, the rays spanned by all classes $L$ with $L^2=K\cdot L=C_i\cdot L=0 \,\forall i$, cover all rational points of a cone over a 8-dimensional rational sphere.
\end{theorem}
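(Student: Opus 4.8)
The plan is to establish the three equivalences through the implications (1)$\Rightarrow$(3), (3)$\Rightarrow$(1), (3)$\Rightarrow$(2) and (2)$\Rightarrow$(3), resting on three properties of the Cremona--Kantor group $\CK$. First, and most importantly, $\CK$ \emph{preserves nefness and the family of $(-1)$--curves}: a standard quadratic Cremona transformation based at three of the points carries the very general configuration to another very general one, so the induced automorphism of $\Pic(X_s)$ is realized by an isomorphism between the two (abstractly identical) very general blow--ups, and such an isomorphism preserves intersection numbers, irreducibility, effectivity and nefness. Second, \emph{Cremona reduction}: by repeatedly sorting multiplicities and applying the Cremona transformation at the top three points, every class is carried by $\CK$ to a \emph{reduced} class with $m_1\ge\cdots\ge m_s$ and $d\ge m_1+m_2+m_3$; a nef class reduces to a nef reduced class, which then has all $m_i\ge 0$ and still satisfies $L^2=K_s\cdot L=0$, i.e. $\sum_i m_i=3d$ and $\sum_i m_i^2=d^2$. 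Third, a lattice fact: contracting disjoint $(-1)$--curves down to $X_{10}$ identifies the relevant complement with $K_{10}^\perp$, the even unimodular Lorentzian lattice $\mathrm{II}_{1,9}\cong E_{10}$, on whose primitive isotropic vectors the Weyl group $W(E_{10})\subset\CK[10]$ acts transitively (up to sign), each being equivalent to $L_3(1^9)=-K_9$.

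With these in hand I would argue as follows. For (1)$\Rightarrow$(3), reduce the nef class $L$ to a reduced class $L'$; the key \textbf{Classification Lemma} asserts that a reduced class with $m_i\ge 0$, $\sum_i m_i=3d$ and $\sum_i m_i^2=d^2$ has exactly nine nonzero, equal multiplicities, so that $L'=\tfrac{d}{3}L_3(1^9,0^{s-9})$. For (3)$\Rightarrow$(1), note that $L_3(1^9,0^{s-9})=-K_9$ is nef, being the base--point free elliptic pencil on the rational elliptic surface $X_9$ pulled back to $X_s$, so any class $\CK$--equivalent to a multiple of it is nef. For (3)$\Rightarrow$(2), observe that $E_{11},\dots,E_s$ are $s-10$ disjoint $(-1)$--curves orthogonal to $L_3(1^9,0^{s-9})$; applying the element of $\CK$ realizing the equivalence, which sends these to disjoint $(-1)$--curves and preserves orthogonality, yields the required curves orthogonal to $L$.

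To close the cycle I would prove (2)$\Rightarrow$(3). Given $s-10$ disjoint $(-1)$--curves $C_1,\dots,C_{s-10}$ with $C_i\cdot L=0$, use $\CK$ to carry $\{C_i\}$ to the standard configuration $\{E_{11},\dots,E_s\}$ (transitivity of $\CK$ on exceptional configurations of this size, a standard consequence of Cremona reduction). Then $L$ is carried to $L'$ with $L'\cdot E_j=0$ for $j>10$, so $L'=\pi^*\bar L$ for the contraction $\pi\colon X_s\to X_{10}$, with $\bar L\in K_{10}^\perp$ isotropic; by the lattice fact $\bar L$, hence $L'$ and $L$, is $\CK$--equivalent to a multiple of $L_3(1^9)$. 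The main obstacle is the Classification Lemma, which I expect to derive from the inequality $(L')^2\ge 0$ valid for reduced classes together with its equality case. Concretely, $d\sum_i m_i-3\sum_i m_i^2=0$ gives $\sum_i m_i(d-3m_i)=0$; since $d\ge m_1+m_2+m_3\ge 3m_3$ forces $m_i\le d/3$ for all $i\ge 3$, every summand with $i\ge 3$ is nonnegative, and a careful accounting of the at most two indices with $m_i>d/3$ forces the multiplicities into $\{0,d/3\}$ with exactly nine equal to $d/3$. This extremal analysis is elementary but delicate, and is where the bulk of the work lies.

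Finally, for the \emph{Moreover} statement, fix disjoint $(-1)$--curves $C_1,\dots,C_{s-10}$ and set $V=\{L\in N_s: K_s\cdot L=0,\ C_i\cdot L=0\ \forall i\}$. The classes $K_s,C_1,\dots,C_{s-10}$ are independent, so $\dim V=(s+1)-(s-9)=10$. The contraction $\pi\colon X_s\to X_{10}$ identifies $\langle C_1,\dots,C_{s-10}\rangle^\perp$ isometrically with $\pi^*\Pic(X_{10})$, under which the functional $K_s\cdot(-)$ corresponds to $K_{10}\cdot(-)$; hence $(V,\cdot)\cong K_{10}^\perp\otimes\bbR\cong\mathrm{II}_{1,9}\otimes\bbR$, of signature $(1,9)$. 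Consequently $\{L\in V:L^2=0\}$ is the null cone of a Lorentzian form of signature $(1,9)$, that is, a cone over an $8$--dimensional sphere. This quadric is defined over $\bbQ$ and contains a rational point, namely the ray of the primitive isotropic class corresponding to $-K_9$, so it is a \emph{rational} sphere: stereographic projection from that point identifies its rational points with $\bbP^8(\bbQ)$, and each rational point is the ray of a primitive integral class $L\in V\cap\Pic(X_s)$ with $L^2=K_s\cdot L=C_i\cdot L=0$, and conversely. Thus the rays spanned by all such $L$ are precisely the rational points of the cone, as claimed.
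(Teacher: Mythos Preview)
Your overall architecture is sound and close in spirit to the paper's: both hinge on the same \emph{Classification Lemma} (the paper's Lemma~\ref{lem:reduced_Kperp}) asserting that a Cremona--reduced class with $m_i\ge 0$, $\sum m_i=3d$, $\sum m_i^2=d^2$ must be a multiple of $L_3(1^9,0^{s-9})$. The paper packages the equivalences differently---it proves (1)$\Leftrightarrow$(3) algorithmically in Proposition~\ref{pro:nefness_Kperp}, then deduces de~Fernex's $s=10$ statement (Proposition~\ref{pro:kperp-10}), and finally runs an induction on $s$ via Proposition~\ref{pro:subcones-Kperp}---whereas you give a direct cycle of implications and, for (2)$\Rightarrow$(3), invoke transitivity of $W(E_{10})$ on primitive isotropic vectors of $\mathrm{II}_{1,9}$ after contracting to $X_{10}$. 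That lattice shortcut is legitimate and arguably cleaner than the paper's inductive route; the ``Moreover'' part you handle essentially as the paper does.

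There is, however, a concrete gap in your sketch of the Classification Lemma. The identity $\sum_i m_i(d-3m_i)=0$ you write down is \emph{tautological}: it is just $d\cdot(3d)-3\cdot d^2=0$, a formal consequence of the two equations, so it carries no new information. In particular it cannot by itself exclude the case $m_1>d/3$ (and one easily writes real tuples with $m_1>d/3$, $m_1+m_2+m_3\le d$ that only fail once the full quadratic constraint is imposed). Your ``careful accounting of the at most two indices with $m_i>d/3$'' therefore needs an extra inequality. The paper supplies exactly this: setting $A=m_1+m_2+m_3$, it first proves the auxiliary bound $m_1^2+m_2^2+m_3^2\le A^2-4aA+6a^2$ for all $a\in[0,m_3]$, applies it with $a=m_4$, and combines it with $\sum_{i\ge4}m_i^2\le m_4\sum_{i\ge4}m_i=m_4(3-A)$ (normalizing $d=1$) to obtain a quadratic inequality in $m_4$ that forces $A=1$ and $m_1=\cdots=m_4=1/3$. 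If you want to salvage your line, the quickest fix is to show directly that $m_1\le d/3$: once that holds, every summand in your identity is nonnegative, hence zero, and you are done. But proving $m_1\le d/3$ from the constraints is precisely where the paper's quadratic estimate enters, so expect to reproduce something of comparable strength.
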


Here a cone over a $8$-dimensional rational sphere means a rational quadric of rank $10$ and signature $(1,9)$ contained in a $10$-dimensional rational linear subspace $\Pi$ of ${N_s}\cong \bbR_{s+1}$  and having rational points. In practice, the quadric will always be cut out on $\Pi$ by the equation $L^2=0$. 

Note that the second condition is empty for $s=10$, so in that case our statement is equivalent to De~Fernex's result mentioned above.

\begin{theorem}\label{thm:main-kplus} 
	Let $s\ge 13$. There exist 10-dimensional linear subspaces $\Pi\subset N_s$ such that
	the intersection of the Mori cone with $\Pi$ consists of classes $L\in \Pi$ such that $L^2\ge 0$ and $K_s\cdot L>0$, namely
	$$\Pi\cap \overline{NE}(X_s) = \{L\in \Pi \,|\, L^2\ge 0, K_s\cdot L>0\}. $$
\end{theorem}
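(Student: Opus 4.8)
The plan is to reduce to the case $s=13$ and then exhibit an explicit subspace. For $s>13$ I would contract $s-13$ disjoint $(-1)$-curves, say the last exceptional classes $E_{14},\dots,E_s$, obtaining a morphism $\pi\colon X_s\to X_{13}$; a class on $X_s$ orthogonal to the contracted curves is a pullback, it is effective if and only if its push-forward is, and pullbacks of nef classes are nef, so $\overline{NE}(X_s)\cap\pi^*N_{13}=\pi^*\overline{NE}(X_{13})$. Since also $\pi_*K_s=K_{13}$ and $(\pi^*L)^2=L^2$, the statement for $X_s$ with $\Pi=\pi^*\Pi_0$ follows from the statement for $X_{13}$ with a suitable $10$-dimensional $\Pi_0\subset N_{13}$. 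So I assume $s=13$ from now on; the hypothesis $s\geq 13$ enters precisely to leave enough room --- a $4$-dimensional orthogonal complement --- for the construction of $\Pi_0$ below to work, which is not available when $s\leq 12$.

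I would require $\Pi_0$ to be non-degenerate of signature $(1,9)$, so that $\{L^2=0\}$ in $\Pi_0$ is a cone over an $8$-sphere with two nappes; call $\mathcal{N}^+$ the nappe meeting the positive (ample) cone $\mathcal{P}$ of $N_{13}$, and let $\bar K$ be the orthogonal projection of $K_{13}$ to $\Pi_0$. One inclusion is then easy: any $L\in\Pi_0$ with $L^2>0$ lying on the $\mathcal{N}^+$-side is big on $X_{13}$, since $h^0(X_{13},mL)\geq 1+\tfrac12(m^2L^2-mK_{13}\cdot L)\to\infty$ (here $h^2(mL)=0$ because $L\cdot H>0$), so $mL$ is effective for $m\gg0$ and hence $\mathcal{N}^+=\overline{\mathcal{P}\cap\Pi_0}\subseteq\overline{NE}(X_{13})$. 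I would also arrange the positioning $\bar K^2>0$ together with $K_{13}\cdot v_0>0$ for some $v_0\in\Pi_0$ with $v_0^2>0$: this makes $\bar K$ a future time-like vector of the Minkowski space $(\Pi_0,(\cdot)^2)$, so that $K_{13}\cdot L=\bar K\cdot L>0$ for every $L\in\mathcal{N}^+\setminus\{0\}$ and $\{L\in\Pi_0 : L^2\geq 0,\ K_{13}\cdot L>0\}=\mathcal{N}^+\setminus\{0\}$; moreover $\bar K^2>0$ forces $\Pi_0\cap K_{13}^\perp$ to be negative definite, so $\Pi_0$ contains no nonzero null class orthogonal to $K_{13}$, which is how the effective nef null classes of Theorem~\ref{thm:main-kperp} get avoided.

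The remaining inclusion, $\overline{NE}(X_{13})\cap\Pi_0\subseteq\mathcal{N}^+$, is the crux. It suffices to show that every null class $N\in\Pi_0$ on the $\mathcal{N}^+$-side is nef on $X_{13}$: then $\overline{NE}(X_{13})\cap\Pi_0$ lies in the intersection of the half-spaces $\{L\in\Pi_0:N\cdot L\geq 0\}$ over all such $N$, which is exactly $\mathcal{N}^+$ by self-duality of the light cone. By density of rational rays and closedness of the nef cone I may take $N$ rational, and then $N^2=0$ with $\deg N=N\cdot H\geq 0$ (both $N$ and $H$ lie in the closed positive cone), so by \cite[Lemma 3.8]{CHMR13} it is enough that $N$ be \emph{non-effective}. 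Thus the whole theorem reduces to producing a $10$-dimensional $\Pi_0$, of signature $(1,9)$ with $\bar K$ future time-like, such that every rational null class of $\Pi_0$ of non-negative degree is non-effective on $X_{13}$. I expect this to be the real work: Riemann--Roch alone does not give it (such an $N$ automatically satisfies $K_{13}\cdot N\geq 2$, but could a priori be super-abundant), and the null rays at issue form an $8$-dimensional family along which $K_{13}\cdot N$ takes every even value $\geq 2$, so infinitely many emptiness statements must be obtained uniformly.

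For this I would assemble $\Pi_0$ out of explicit good rays of $X_{13}$ --- the basic one being the ray spanned by $L_4(2,1^{12})$, the class of plane quartics with a double point and through the other twelve very general points, which has virtual dimension $-1$ and empty linear system --- choosing ten of them with Gram matrix of signature $(1,9)$ and $\bar K$ future time-like, and then proving that every rational null ray of the span $\Pi_0$ is again a good ray. This last point is the delicate combinatorial heart: one reduces such a class, by Cremona--Kantor moves at its three largest multiplicities, to a standard form $L_d(m_1,\dots,m_{13})$ with $K_{13}\cdot L=2k>0$, and shows that this standard form is empty for very general points --- by a dimension count combined with a specialization of the points in the manner of Ciliberto--Miranda, presumably run as an induction on $k$. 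Since Cremona--Kantor transformations are isometries of $N_{13}$ fixing $K_{13}$ and, for very general points, preserving effectivity, emptiness of the standard forms propagates to all rational null rays of $\Pi_0$. Combining the two inclusions gives $\overline{NE}(X_{13})\cap\Pi_0=\mathcal{N}^+=\{L\in\Pi_0:L^2\geq 0,\ K_{13}\cdot L>0\}$, and pulling back along $\pi$ yields the theorem for all $s\geq 13$. (The positioning is in fact automatic once non-effectivity is known, since a non-effective null class of degree $\geq 0$ has $K_{13}\cdot N\geq 2>0$, forcing $\bar K$ to pair positively with every future null ray and hence to be future time-like; so the non-effectivity really is the only content.)
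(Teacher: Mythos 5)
Your reduction of the theorem to a single statement --- find a $10$-dimensional $\Pi_0$ of signature $(1,9)$ in which every rational null class of non-negative degree is non-effective --- is correct, and its logical skeleton matches the paper's (non-effective $+$ rational $+$ $\deg\ge 0$ $+$ self-intersection $0$ gives a good, hence nef and extremal, ray by \cite[Lemma 3.8]{CHMR13}, and the solid cone over one nappe is self-dual). But that statement is exactly where your proposal stops being a proof. As you yourself note, ``infinitely many emptiness statements must be obtained uniformly'' over an $8$-dimensional family of null rays on which $K_{13}\cdot N$ is unbounded, and your plan for obtaining them --- span $\Pi_0$ by ten sporadic good rays such as $\langle L_4(2,1^{12})\rangle$, Cremona-reduce an arbitrary rational null class of the span, and kill it by ``a dimension count combined with a specialization \dots presumably run as an induction'' --- is not carried out, and there is no reason it should go through for an arbitrary choice of ten good rays: uniform non-effectivity of self-intersection-zero, $K$-positive classes is of the same nature as the Strong $\Delta$-conjecture and is not accessible by dimension counts alone. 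The entire difficulty of the theorem is the \emph{choice} of $\Pi_0$ making the uniform emptiness provable, and that choice is missing from your argument.

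The paper's solution is to obtain $\Pi$ as the image of a single linear map rather than as the span of examples. Set $s'=s-3\ge 10$ and let $\Pi'\subset N_{s'}$ be the $10$-dimensional subspace orthogonal to $K_{s'}$ and to $s'-10$ disjoint $(-1)$-curves; by Theorem \ref{thm:main-kperp} every null class $D\in\Pi'$ of non-negative degree is nef, and every integral such $D$ is $\CK[s']$-equivalent to a multiple of $L_3(1^9,0^{s'-9})$, hence has $h^0(D)=1$. Then $\Pi$ is defined as the image of $\Pi'$ under the injective linear map $\Uncoll_2(\cdot,i)$, replacing one multiplicity $2m$ by four points of multiplicity $m$. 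The collision degeneration of Lemma \ref{lem:uncollision}(a), whose hypothesis $h^0\le m$ is supplied by $h^0(D)=1$, shows in one stroke that every rational null class of $\Pi$ of non-negative degree is non-effective (Proposition \ref{pro:uncoll_kperp}), and Remark \ref{rem:uncollision_K} together with a short argument about $\Pi\cap K_s^\perp$ gives $K_s\cdot L>0$ on the whole null cone of $\Pi$. Your example $L_4(2,1^{12})$ is in fact $\Uncoll_2(L_4(2,2,1^8),2)$, one of the classes this construction produces; what you are missing is that the entire $10$-dimensional family arises this way, with the degeneration argument delivering the uniform emptiness that your sketch leaves unproved.
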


\begin{theorem}\label{thm:main-fminus} 
	Let $s = k^2+4$ for some integer $k\ge 3$. 
	There exist 10-dimensional linear subspaces $\Pi\subset N_s$ such that
	the intersection of the Mori cone with $\Pi$ consists of classes $L\in \Pi$ such that $L^2\ge 0$ and has non--empty intersection with the $F_s$--negative half space. In particular, the conditions 
	$L\in \Pi$, $L^2\ge 0$ and $F_s\cdot L<0$ define an open subset (in the Euclidean topology) of an 8-dimensional sphere such that the cone over it is contained in the boundary of the Mori cone.
\end{theorem}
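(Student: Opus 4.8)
The plan is to realize $\Pi$ as a specialization of the subspaces produced for Theorem~\ref{thm:main-kplus}, using the extra flexibility of that construction to push the boundary sphere into the de~Fernex--negative region; the arithmetic hypothesis $s=k^2+4$ is precisely what makes the relevant inequality involving $F_s$ hold. Concretely, I would produce a rank-$10$ primitive sublattice $\Lambda\subset\Pic(X_s)$ of signature $(1,9)$, built out of $H$ and the $E_i$ using the decomposition $s=k^2+4$ (for instance anchored at a Nagata-type isotropic class supported on a block of $k^2$ of the points), and set $\Pi:=\Lambda\otimes_{\bbZ}\bbR$. Being rational, indefinite and of rank $\ge 5$, $\Pi$ automatically contains rational isotropic vectors, and these are dense in the quadric cone $\{L^2=0\}\cap\Pi$, which, the form having signature $(1,9)$, is a cone over an $8$--sphere. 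Two further properties of $\Pi$ must be checked by a finite computation: first, that $K_s$ restricts to $\Pi$ as a timelike functional, i.e. $K_s^\perp\cap\Pi$ is negative definite, and that the resulting distinguished nappe $P$ of the positive cone of $\Pi$ (on which $K_s\cdot L>0$) coincides with the nappe on which $\deg L>0$; second --- and this is the point where $s=k^2+4$ is used --- that $F_s$ does \emph{not} restrict to $\Pi$ as a timelike functional, so that $F_s^\perp\cap\Pi$ meets the interior of $P$.

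The heart of the matter is then to prove that the closed nappe $\bar P:=\{L\in\Pi:L^2\ge0,\ K_s\cdot L\ge0\}$ equals $\Pi\cap\overline{NE}(X_s)$, and that its boundary $\partial P=\{L\in\Pi:L^2=0,\ K_s\cdot L\ge0\}$ lies on $\partial\overline{NE}(X_s)$. For the inclusion $\supseteq$, the interior of $\bar P$ consists of classes with $L^2>0$ and $\deg L>0$, hence big, hence effective; for the boundary I would show that every \emph{rational} ray $R\subset\partial P$ is non-effective on $X_s$, whence by \cite[Lemma~3.8]{CHMR13} it is a good ray --- nef and extremal for both $\overline{NE}(X_s)$ and the nef cone --- and conclude for all of $\partial P$ by density and closedness. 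Proving this non-effectiveness is the crux: I would reduce each such $R$ to a normal form under the Cremona--Kantor group, as in Theorem~\ref{thm:main-kperp}, and show the normalized class has no global sections, using the very general position of the $s$ points. For the inclusion $\subseteq$: if $L\in\Pi$ has $L^2<0$, then, because $\Pi$ is Lorentzian, the function $A\mapsto A\cdot L$ changes sign on the nappe $\partial P$, so there is an isotropic $A\in\partial P$ with $A\cdot L<0$; as $A$ is nef this gives $L\notin\overline{NE}(X_s)$, and the opposite nappe is excluded because its nonzero members have $\deg L<0$. Finally, a class of $\partial P$ cannot lie in the interior of $\overline{NE}(X_s)$: an $N_s$-neighbourhood of it inside $\overline{NE}(X_s)$ would intersect $\Pi$ in a neighbourhood contained in $\bar P$, contradicting membership in $\partial P$; hence $\partial P\subseteq\partial\overline{NE}(X_s)$.

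Granting this, the theorem follows. We have $\Pi\cap\overline{NE}(X_s)=\bar P$, a cone over an $8$--sphere, of the stated form $\{L\in\Pi:L^2\ge0,\ K_s\cdot L\ge 0\}$. Since $F_s^\perp\cap\Pi$ meets the interior of $\bar P$, the set $\{L\in\partial P:F_s\cdot L<0\}$ is a nonempty Euclidean-open subset of the $8$--sphere $\partial P/\bbR_{>0}$, the cone over which is contained in $\partial\overline{NE}(X_s)$; by the observation that $F_s\cdot L<0$ forces $K_s\cdot L>0$, this cone lies entirely in the de~Fernex--negative, $K_s$--positive region, and its rational points are good rays.

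The step I expect to be the main obstacle is the non-effectiveness claim for the rational rays of $\partial P$ in the second paragraph: it is the only place where the very general position of the points enters substantively, and (unlike the known $s=10$ and $s=12$ examples, and unlike Theorem~\ref{thm:main-kperp}) it must be established uniformly over a $9$--dimensional family of classes at once. By contrast the hypothesis $s=k^2+4$ plays a comparatively minor role: it is needed only to arrange, by an explicit computation, that $F_s$ restricts to $\Pi$ in the non-timelike way that lets the boundary sphere cross $F_s^\perp$ and enter the de~Fernex--negative half-space.
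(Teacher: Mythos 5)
There is a genuine gap, and it sits exactly where you predict: the non-effectiveness of the rational rays on $\partial P$. You propose to establish it by reducing each such ray to a normal form under the Cremona--Kantor group ``as in Theorem~\ref{thm:main-kperp}'' and checking that the reduced class has no sections. But that method is specific to classes orthogonal to $K_s$: there, Lemma~\ref{lem:reduced_Kperp} forces the Cremona-reduced form to be a multiple of $L_3(1^9,0^{s-9})$, whose effectivity is transparent. The classes relevant here satisfy $K_s\cdot L>0$ and $L^2=0$, and for such classes Cremona reduction terminates at a reduced class (degree at least the sum of the three largest multiplicities) whose \emph{non}-effectivity is precisely the content of the open Strong $\Delta$-Conjecture and of Nagata's conjecture --- ``very general position of the points'' gives no mechanism to conclude. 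If your reduction step worked uniformly over a $9$-dimensional family of $K$-positive isotropic classes, you would in particular be proving cases of Nagata. So the crux of your argument is not merely difficult; the tool you name for it cannot close it.

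The paper's route is structurally different and avoids this entirely. It does not choose $\Pi$ by hand inside $N_s$; it obtains $\Pi$ as the image of the injective linear uncollision map $\Uncoll_{k-1}(\cdot,1):N_{2k+4}\to N_{k^2+4}$ applied to one of the $10$-dimensional subspaces of $K^\perp$-nef classes furnished by Theorem~\ref{thm:main-kperp} / Proposition~\ref{pro:subcones-Kperp} (chosen to pass through the class $L$ underlying the de~Fernex-negative wonderful rays of \cite[Proposition 17]{CMR23}). Non-effectiveness of every rational ray of $\partial P$ then comes from the degeneration argument: each nef class in $\Qperp[2k+4]$ is $\CK$-equivalent to a multiple of $L_3(1^9,0^{\ldots})$, hence has $h^0=1$, and Lemma~\ref{lem:uncollision} (collision of $(k-1)^2$ points) shows its uncollision is non-effective --- this is Proposition~\ref{pro:uncoll_kperp}. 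The $F_s$-negativity of part of the resulting sphere is then not a lattice computation to be arranged but is inherited from the single de~Fernex-negative ray already produced in \cite{CMR23}, plus openness of the condition $F_s\cdot L<0$. Your outline of the convex-geometric bookkeeping (interior classes are big, isotropic nef classes separate $L^2<0$ from the Mori cone, density of rational isotropic rays in a rational Lorentzian subspace) is sound and matches what the paper implicitly uses via Theorem~\ref{thm:main-kplus}; but without the degeneration input the proof does not go through.
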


Our starting point is the de~Fernex result  on extremal
Mori rays orthogonal to $K_{10}$ (that we reprove in a different way in Section \ref{sec:K-perp} using Cremona maps), and then we proceed with the uncollision techniques
developed in \cite{CMR23} (see Section \ref{sec:collision})
to  produce new families of good and wonderful rays, in Section \ref{sec:round-parts}.

\section{Nefness on $K^\perp$}
\label{sec:K-perp}

In this section we study classes {$L=L_d(m_1,\ldots, m_s)$} such that $L^2 = L\cdot K_s=0$ with respect to nefness.
We recover by elementary methods de~Fernex's result for $s=10$ that every class with $L^2=L\cdot K_{10}=0$ is nef, and we extend it to describe the locus $\Nefperp$ of all nef classes with $L^2=L\cdot K_{s}=0$ for $s>10$.

Recall that, due to the Index theorem and the fact that $K_s^2>0$ for $s<9$, $K_s^2=0$ for $s=9$ and $K_s^2<0$ for $s>9$, the intersection form on the space $K_s^\perp$ of classes orthogonal to $K_s$ is negative definite for $s<9$, it is negative semidefinite for $s=9$, and it has signature $(1,s-1)$ for $s>9$. 
Therefore, as it is well known, there are {in $N_s$ nonzero classes with $L^2 = K_s\cdot L=0$  only for $s\ge 9$; for $s=9$ these are exactly the multiples of $K_9$, and the ones with non-negative degree form the ray $\langle-K_9\rangle$; and for $s\ge 10$ they form a rational quadratic cone of dimension $s-1$ in $N_s \cong \bbR^{s+1}$.

For $s\ge9$, let $\Qperp \subset N_s$ be the locus of all classes with $L^2=K_s\cdot L=0$ and $H\cdot L\ge 0$ (since we are interested in effectivity and nefness, only classes meeting the  class $H$ nonnegatively are relevant).
By the previous paragraph, $\Qperp[9]$ consists of the single ray spanned by $-K_9$, whereas for $s\ge10$ it is the $H$-nonnegative half of a rational quadratic cone, the boundary of a convex ``round'' cone in the hyperplane $K^\perp$ of classes $L$ with $K_s\cdot L=0$. 

\subsubsection*{The Cremona-Kantor action}
Consider the group $\CK$ generated by quadratic birational transformations of $\bbP^2$
based at subsets of 3 points among the $s$ very general points and by permutations of these points (see \cite{DuV36}, \cite{A02}, \cite{Dol12}).
The Cremona-Kantor group acts on the set of divisor classes  $L_d(m_1,\ldots, m_s)$
preserving all numerical and cohomological properties (nefness, effectiveness, dimension of the associated linear system $\calL_d(m_1,\ldots, m_s)$, etc.).
A divisor class $L$ is \emph{Cremona reduced}
if it has minimal degree in its (CK)-orbit, and for effective classes 
this is the case if and only if the degree 
is greater or equal to the sum of the three largest multiplicities
{
(see \cite[p. 402-403, Thms 8 - 10]{C31};
a modern and more precise treatment may be found in \cite{CC10}).

The canonical class $K_s=-3H +\sum E_i$ is fixed under
the action of $\CK$, so $\CK$ preserves 
(and acts upon) the set $\Qperp$  of all classes of selfintersection
zero orthogonal to $K_s$.

\begin{remark}
	\label{rem:-1-class-cut}
	Let $E$ be the class of a $(-1)$-curve in $\Pic (X_s)$, hence $E^2=K_s\cdot E=-1$.
	It is well known that for $s\ge 3$ all $(-1)$-curves are $\CK$-equivalent (here we use the fact that the points blown up to construct $X_s$ are very general).
		
	Fix $s\ge 9$.
	Each hyperplane in $N_s$ either meets the cone $\Qperp$ only at the origin,  is tangent to it along a ray, or cuts it in two regions (which can only happen for $s\geq 10$). 
	Let us consider the case of $E^\perp$, the hyperplane of classes orthogonal to $E$; because all $E$ are $\CK$-equivalent, which kind of intersection there is between $E^\perp$ and $\Qperp$ depends only on $s$.
	
	Assume for simplicity that $E=E_s$ is the exceptional curve of the last blowup. In that case, a class belongs to $E^\perp$ if and only if it is the pullback to $X_s$ of a class in $X_{s-1}$ (via $X_s\rightarrow X_{s-1}$, the blowing-up of the last point).
	Since pullback preserves intersection multiplicities, $\Qperp \cap E^\perp \cong \Qperp[s-1]$.
	Therefore $E^\perp$ meets $\Qperp$ at the origin for $s=9$, they are tangent along a single ray $\langle -K_9\rangle$ for $s=10$, and they intersect along a lower dimensional irreducible quadric for $s>10$, cutting $\Qperp$ in two regions.
\end{remark}}

\begin{lemma}
	\label{lem:reduced_Kperp}
	Let {$L=L_d(m_1,\ldots, m_s)$} be a class satisfying $L^2=K_s\cdot L=0$
	such that $d$ is greater or equal to the sum of the three largest
	multiplicities. 
	Then $s\ge 9$, and up to reordering of the divisors $E_i$, $L$ is a 
	multiple of {$L_3(1^9,0^{s-9})$}.
\end{lemma}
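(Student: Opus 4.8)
The plan is to turn the two quadratic conditions into symmetric‑function identities, sort the multiplicities, and then read the conclusion off a single tight chain of elementary inequalities; as is implicit throughout the paper, the multiplicities $m_i$ are taken to be non‑negative.

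First I would record that, since $H^2=1$, $E_i^2=-1$ and $H\cdot E_i=0$, the conditions $K_s\cdot L=0$ and $L^2=0$ are equivalent to $\sum_{i=1}^s m_i=3d$ and $\sum_{i=1}^s m_i^2=d^2$. Reordering the $E_i$ we may assume $m_1\ge m_2\ge\cdots\ge m_s\ge 0$, and then the hypothesis on $d$ reads $m_1+m_2+m_3\le d$. From $\sum_i m_i=3d\ge 0$ we get $d\ge 0$, and $d=0$ forces $L=0$; so we may assume $L\neq 0$, hence $d>0$. A Cauchy--Schwarz estimate already gives $s\ge 9$, since $9d^2=(\sum_i m_i)^2\le s\sum_i m_i^2=sd^2$.

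The heart of the matter is the following. Put $p_i=m_i/(3d)\ge 0$, so that $\sum_i p_i=1$, $\sum_i p_i^2=1/9$, $p_1\ge\cdots\ge p_s\ge 0$, and $\sigma:=p_1+p_2+p_3=(m_1+m_2+m_3)/(3d)\le 1/3$. Bounding the first three squares against $p_1$ and the remaining ones against $p_3$, and then using $p_1=\sigma-p_2-p_3\le\sigma-2p_3$, the inequality $3p_3\le p_1+p_2+p_3=\sigma$, and $\sigma\le 1/3$ (so that $p_3\le\sigma/3$ and $1-3\sigma\ge 0$), one gets
\begin{align*}
\tfrac19=\sum_{i=1}^s p_i^2 &\le p_1\sigma+p_3(1-\sigma)\le(\sigma-2p_3)\sigma+p_3(1-\sigma)\\
&=\sigma^2+p_3(1-3\sigma)\le\sigma^2+\tfrac{\sigma}{3}(1-3\sigma)=\tfrac{\sigma}{3}\le\tfrac19 .
\end{align*}
Hence every inequality is an equality. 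In particular $\sigma=1/3$; the equality $p_1=\sigma-2p_3$ forces $p_2=p_3$; the equalities in $p_j^2\le p_1p_j$ ($j=2,3$) and in $p_i^2\le p_3p_i$ ($i\ge 4$) force $p_2,p_3\in\{0,p_1\}$ and each $p_i\in\{0,p_3\}$ for $i\ge 4$. Since the remaining $p_i$ must add up to $1-\sigma=2/3>0$ we cannot have $p_3=0$, so $p_1=p_2=p_3=\sigma/3=1/9$ and exactly six more of the $p_i$ equal $1/9$. Translating back, exactly nine of the $m_i$ equal $d/3$ (so in particular $3\mid d$) and all the others vanish; therefore $s\ge 9$ and, up to reordering, $L=\tfrac d3\bigl(3H-E_{i_1}-\cdots-E_{i_9}\bigr)=\tfrac d3\,L_3(1^9,0^{s-9})$, a multiple of $L_3(1^9,0^{s-9})$.

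The only genuine difficulty is locating this chain of estimates: the trick is to split the sum of squares as "top three versus the rest", bound the first block by $p_1\sigma$ and the second by $p_3(1-\sigma)$, and then feed in the reduced‑ness bound $\sigma\le 1/3$ together with the ordering bound $p_3\le\sigma/3$, so that the whole expression telescopes down to $\sigma/3\le 1/9$; the equality case then pins the configuration down completely. The non‑negativity of the $m_i$ is used both to deduce $d\ge 0$ and to validate the term‑by‑term estimates $p_j^2\le p_ip_j$.
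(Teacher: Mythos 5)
Your proof is correct, and while it follows the same broad strategy as the paper's (translate $L^2=K_s\cdot L=0$ into $\sum m_i=3d$, $\sum m_i^2=d^2$, sort the multiplicities, split the sum of squares into ``top three versus the rest'', and finish by an equality analysis), the execution is genuinely different and arguably cleaner. The paper bounds the top block by the quadratic $A^2-4aA+6a^2$ with $a=m_4$ and the tail by $m_4(3-A)$, arriving at a quadratic inequality in $m_4$ whose roots must be compared with the forced window $[(3-A)/(s-3),A/3]$; the case analysis there hinges on a discriminant computation and on the assumption $A\le 1$. You instead bound the top block by the linear quantity $p_1\sigma$ and the tail by $p_3(1-\sigma)$, then feed in $p_1\le\sigma-2p_3$, $p_3\le\sigma/3$ and $\sigma\le 1/3$ to telescope the whole estimate down to $1/9\le\sigma/3\le 1/9$. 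This avoids the discriminant analysis entirely, makes every step reversible for the equality discussion, and yields $s\ge 9$ for free via Cauchy--Schwarz rather than as a byproduct. I verified each link of your chain (including that equality in step (C) is automatic once $\sigma=1/3$, so the configuration is pinned down by the equalities in (A), (B) and (D) together with $\sum_{i\ge4}p_i=2/3$, which rules out $p_3=0$); everything checks out. One shared caveat: like the paper's proof, yours uses $m_i\ge 0$, which is not stated in the lemma but is assumed in the paper's own argument; you were right to flag it explicitly.
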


\begin{proof}
	We will prove that every solution $(d,m_1,\dots,m_s)\in \mathbb{R}^{s+1}$
	to the following set of equations and inequalities:
\begin{itemize}
	\item $d^2 = \sum_{i=1}^s m_i^2$.
	\item $3d =  \sum_{i=1}^s m_i$.
	\item $m_1\ge m_2 \ge \dots \ge m_s \ge 0$.
	\item $d\ge m_1+m_2+m_3$.
\end{itemize}	
	is of the form $(d,m_1,\dots,m_s)=(3m, m^9, 0^{s-9})\in \mathbb{R}^{s+1}$.
	Since $K_s=-3H + \sum_{i=1}^s E_i$, the two equations are equivalent to
	$L^2=0$ and $K_s\cdot L=0$ respectively.
	
	All conditions and the statement are homogeneous in the parameters, so we may assume additionally that $d=1$,  and we shall prove that $(d,m_1,\dots,m_s)=(1, (1/3)^9, 0^{s-9})$. We will need the following auxiliary statement:
	\begin{claim}
	Let $A=m_1+m_2+m_3$.
		For any $a\in [0,m_3]$, 
			$$m_1^2+m_2^2+m_3^2 \le A^2-4aA+6a^2.$$
		Moreover equality holds if and only if $m_1=m_2=m_3=a=A/3$.
	\end{claim}
	\begin{proof}
		 Define $s_2 = m_1m_2 + m_1m_3 + m_2m_3$, the second symmetric function. Expanding
		$A^2$ on the right hand side and cancelling the squares, the claimed inequality is equivalent to
		\begin{equation}
			\label{eq:bound_s_2}
			2s_2 - 4Aa + 6a^2 \ge 0\text{ for all }a \in [0, m_3].		
		\end{equation}
	For fixed $A$, the quantity $s_2$ is minimized when the $m$'s are all equal, i.e., $m_1 = m_2 = m_3 = A/3$; in that case $s_2 = A^2/3$. 
	Since $m_3 \le A/3$, \eqref{eq:bound_s_2} is implied by the statement that
\begin{equation}
\label{eq:bound_A}
2A^2/3 - 4Aa + 6a^2 \ge 0\text{ for all a }\in [0, A/3].
\end{equation}		
	{This is clear, since} the left hand side factors as $(2/3)(A - 3a)^2$, and is always non-negative.
	
		If equality holds, then $s_2$ must achieve its minimum, all the $m$'s are equal to $A/3$, and
		the final inequality must also be an equality, forcing $a = A/3$ as well.
	\end{proof}

Now we can complete the proof of Lemma \ref{lem:reduced_Kperp}.
Since $m_4 \le m_3$, the claim applies with $a=m_4$ and we conclude that
\begin{equation}
	m^2_1 + m^2_2 + m^2_3 \le A^2 - 4Am_4 + 6m^2_4
\end{equation}
with equality holding if and only if $m_1 = m_2 = m_3 = m_4 = A/3$.

By the second equation in the hypotheses, $\sum_{i=4}^s m_i = 3 - A$. Therefore since the $m_i$'s descend, we have 
$$\sum^s_{i=4} m^2_i \le m_4 \sum^s_{i=4} m_i = (3 - A)m_4,$$
so that using the first equation in the hypotheses gives
\begin{equation}
\label{eq:boundAm4}
1 = \sum^s_{i=1} m_i^2 \le A^2 - 4Am_4 + 6m^2_4 + (3 - A)m_4
= A^2 - (5A - 3)m_4 + 6m^2_4.
\end{equation}
This gives a quadratic inequality for $m_4$ that implies that $m_4$ must lie outside the open
interval
$$I = ( \frac{5A - 3 - \sqrt{A^2 - 30A + 33}
}{12} , \frac{5A - 3 + \sqrt{A^2 - 30A + 33}}{12}
)$$
However $m_4$ must be at most $A/3$, and at least $\sum^s_4 m_i/(s - 3) = (3 - A)/(s - 3)$, and
therefore lies in the interval $[(3 - A)/(s - 3), A/3]$.
Now suppose that $A \le 1$. Then a simple calculation shows that the left endpoint of $I$ is
at most zero; since $m_4$ cannot be zero in this case, we must have that $m_4$ is greater than or
equal to the right endpoint of $I$.
However if $A \le 1$, another calculation shows that this right endpoint is at least $A/3$.
This forces $m_4 = A/3$, which forces $m_1 = m_2 = m_3 = m_4 = A/3$ as well. It also
implies that the right endpoint of $I$ is equal to $A/3$; this implies that $A = 1$. Therefore the
first four multiplicities are equal to $1/3$.
Now since $\sum^s_{i=4} m_i = 2$ and $\sum^s_{i=4} m^2_i = 2/3$,
the only way this works is to have 6 of the $m_i$'s for $i \geq 4$ equal to $1/3$ and the rest equal to zero.
\end{proof}

\begin{proposition}
	\label{pro:nefness_Kperp}
	Let {$L=L_d(m_1,\ldots, m_s)$} be a class with $d,m_i \in \bbZ_{\ge 0}$ satisfying $L^2=K_s\cdot L=0$. The following are equivalent:
	\begin{itemize}
		\item $L$ is nef.
		\item $L$ is equivalent to a 
	multiple of {$L_3(1^9,0^{s-9})$} under the action of the Cremona-Kantor group $\CK$.
		\item There is no $(-1)$-curve $E$ with $E\cdot L < 0$.
	\end{itemize}
\end{proposition}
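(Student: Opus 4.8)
The plan is to establish the cyclic chain of implications (1) $\Rightarrow$ (3) $\Rightarrow$ (2) $\Rightarrow$ (1). The first implication is immediate: a $(-1)$-curve is irreducible, hence effective, so a nef class cannot have negative intersection with it. For (2) $\Rightarrow$ (1), recall that $\CK$ carries $(-1)$-curves to $(-1)$-curves and preserves nefness, so it is enough to check that $L_3(1^9,0^{s-9})$ is nef. This class is the pullback to $X_s$, under the morphism $X_s\to X_9$ contracting $E_{10},\dots,E_s$, of the anticanonical class $-K_9=L_3(1^9)$ on $X_9$; since pullback preserves nefness, it suffices to see that $-K_9$ is nef on $X_9$. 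Because the nine points are very general they lie on a smooth plane cubic, so $-K_9$ is represented by an irreducible curve of self-intersection $(-K_9)^2=K_9^2=0$, and an irreducible curve of non-negative self-intersection is nef.

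The heart of the matter is (3) $\Rightarrow$ (2), which I will get by Cremona reduction. Assume $L$ satisfies (3); after a permutation (an element of $\CK$) we may take $m_1\ge m_2\ge\dots\ge m_s$, and from $d^2=\sum m_i^2\ge m_1^2$ we get $d\ge m_1$. If $d\ge m_1+m_2+m_3$, Lemma~\ref{lem:reduced_Kperp} finishes the argument. Otherwise apply the quadratic Cremona transformation based at the three points of largest multiplicity: it replaces $L$ by a class $L'\in\Pic(X_s)$ of degree $d'=2d-m_1-m_2-m_3<d$, with multiplicities $d-m_2-m_3$, $d-m_1-m_3$, $d-m_1-m_2$ at those three points and the other multiplicities unchanged. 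The key point is that each of the line classes $H-E_i-E_j$ with $\{i,j\}\subset\{1,2,3\}$ is a $(-1)$-curve, so hypothesis (3) forces $d-m_i-m_j\ge0$; combined with $d\ge m_1\ge m_3$ this shows $L'$ again has non-negative degree and non-negative multiplicities. Moreover $L'$ still satisfies $(L')^2=K_s\cdot L'=0$ and, being a $\CK$-transform of $L$, still satisfies (3). Since the degree strictly dropped and degrees are non-negative integers, iterating this procedure — re-sorting the multiplicities after each transformation — terminates after finitely many steps at a class $L^\ast$ in the $\CK$-orbit of $L$ with $d^\ast\ge m^\ast_1+m^\ast_2+m^\ast_3$, and Lemma~\ref{lem:reduced_Kperp} then identifies $L^\ast$, up to reordering, as a multiple of $L_3(1^9,0^{s-9})$. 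This proves (2).

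The only real obstacle, and the single place where hypothesis (3) is genuinely used, is keeping the reduction process well-founded: a priori a quadratic transformation might introduce a negative multiplicity or a negative degree, after which ``degree'' would cease to be a legitimate monovariant and Lemma~\ref{lem:reduced_Kperp}, which is stated for ordered non-negative data, would not apply. Condition (3) precludes exactly this, through the three exceptional line classes $H-E_i-E_j$; this is the conceptual reason why (3), and not merely the effectivity of $L$, is the right hypothesis to pair with $L^2=K_s\cdot L=0$. Everything else is either immediate or a direct appeal to Lemma~\ref{lem:reduced_Kperp}. (The degenerate case $L=0$ is trivial, being simultaneously nef, $\CK$-fixed, and the zero multiple of $L_3(1^9,0^{s-9})$.)
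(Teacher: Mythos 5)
Your proof is correct and follows essentially the same route as the paper: Cremona reduction based at the three points of largest multiplicity, with termination handled by Lemma~\ref{lem:reduced_Kperp}. The only organizational differences are that you use hypothesis (3) up front (via the line classes $H-E_i-E_j$) to guarantee the reduction never produces a negative degree or multiplicity, whereas the paper runs the reduction unconditionally and, when a negative multiplicity appears, transports the corresponding $E_i$ back through the inverse Cremona maps to exhibit a $(-1)$-curve meeting $L$ negatively; and that you supply an explicit justification (pullback of the irreducible anticanonical cubic on $X_9$) for the nefness of $L_3(1^9,0^{s-9})$, which the paper asserts without comment.
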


\begin{proof}
	The proof is algorithmic.
	
	If $d=0$, then $L$ is nef if and only if $L=0$, so we may assume that $d>0$.
	
	If $d$ is not smaller than the sum of the three largest $m_i$, then the previous lemma shows that $L$ is a permutation of a multiple of {$L_3(1^9,0^{s-9})$} (and in particular it is nef).
	
	Alternatively, $d$ is less than the sum of the three largest $m_i$.
	Then we may perform a quadratic Cremona map based at the three points of largest multiplicity and the resulting class $L'$ is $\CK$-equivalent to $L$ and has a smaller degree. 
	As long as the degree and multiplicities stay non-negative and $d$ is  less than the sum of the three largest multiplicities, we can replace $L$ by $\CK$-equivalent classes
	$L^{(k)}=L_{d^{(k)}}(m_1^{(k)},\dots,m_2^{(k)})$ with smaller degree.
	The process finishes in one of the following ways:
	\begin{itemize}
		\item One or more of the multiplicities $m_i^{(k)}$ is negative. Then $L^{(k)}$ is not nef, as it intersects the corresponding $E_i$ negatively. The original $L$ is therefore not nef (and the application to $E_i$ of the same quadratic Cremona maps in the reverse order produces a $(-1)$-curve meeting $L$ negatively).
		\item $d^{(k)}<0$. This obviously is also non-nef, and the equality $3 d^{(k)}=\sum m_i^{(k)}$ that follows from $L\cdot K_s=0$ implies that one or more of the multiplicities $m_i^{(k)}$ is negative, so the previous description applies: there is a $(-1)$ curve meeting $L$ negatively.
		\item $d^{(k)}$ is no less than the sum of the three largest multiplicities $m_i^{(k)}$. Then by lemma \ref{lem:reduced_Kperp}, $L^{(k)}$ is a permutation of a multiple of {$L_3(1^9,0^{s-9})$}.
		In particular $L$ is equivalent to a 
		multiple of {$L_3(1^9,0^{s-9})$} under the action of the Cremona-Kantor group $\CK$, and it is nef.\qedhere
	\end{itemize}
\end{proof}

We now consider the non-nef cases, i.e., classes $L=L_d(m_1,\ldots, m_s)$ with $L^2=K_s\cdot L=0$ such that there is a $(-1)$-curve $E$ with $L\cdot E < 0$.

\begin{proposition}[{de~Fernex \cite{DeF01}}]
	\label{pro:kperp-10}
	Let $s=10$.
	Every class $L=L_d(m_1,\ldots, m_s)$ with $d,m_i \ge 0$ satisfying $L^2=K_s\cdot L=0$ is nef.
	
	Moreover, there is a bijection $\phi$ between the set of {rays spanned by} such integral classes $L$ and the set of $(-1)$-curves on $X_{10}$, such that $\phi(L)$ is the unique $(-1)$-curve $E$ with $L\cdot E=0$.
\end{proposition}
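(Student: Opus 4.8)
The plan is to split the argument into the nefness statement and the construction of the bijection $\phi$, using throughout that the Cremona--Kantor group $\CK$ acts transitively on the rays in question. For nefness I would run the reduction algorithm from the proof of Proposition \ref{pro:nefness_Kperp} and show that when $s=10$ it never leaves the nonnegative octant, so it must terminate in the branch governed by Lemma \ref{lem:reduced_Kperp}: then $L$ is $\CK$-equivalent to a multiple of $L_3(1^9,0)$ and hence nef. The only way the algorithm can fail is at the Cremona step based at the three points of largest multiplicity, which (when $d<m_1+m_2+m_3$) replaces $m_1\ge m_2\ge m_3$ by $d-m_2-m_3\ge d-m_1-m_3\ge d-m_1-m_2$, lowers the degree, and leaves the other $m_i$ unchanged. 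Since every class produced again has $L^2=K_{10}\cdot L=0$ and all the new multiplicities are nonnegative iff $d\ge m_1+m_2$ (and then $d'\ge 0$ too, from $3d'=\sum_i m_i'$), it is enough to prove once and for all: \emph{if $d,m_i\ge 0$ with $m_1\ge\cdots\ge m_{10}$, $d^2=\sum_i m_i^2$ and $3d=\sum_i m_i$, then $d\ge m_1+m_2$.}

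This inequality is the heart of the matter and the only genuine computation. Normalizing $d=1$ and setting $u=m_1+m_2$, suppose for contradiction that $u>1$. From $(m_1-m_2)^2\ge 0$ we get $m_1^2+m_2^2\ge u^2/2$, and Cauchy--Schwarz applied to the eight numbers $m_3,\dots,m_{10}$, whose sum is $3-u\ge 0$, gives $\sum_{i\ge 3}m_i^2\ge (3-u)^2/8$. Adding these, $1=\sum_i m_i^2\ge u^2/2+(3-u)^2/8$, which simplifies to $5u^2-6u+1\le 0$, i.e.\ $(5u-1)(u-1)\le 0$, forcing $u\le 1$ --- a contradiction. (For general $s$ the same computation yields $s\,u^2-12u+2(11-s)\le 0$, which no longer forces $u\le 1$ once $s\ge 11$; this is precisely why the statement is special to $s=10$.) Hence every integral $L$ with $d,m_i\ge 0$ and $L^2=K_{10}\cdot L=0$ is nef; the real case follows because the rational points are dense in $\Qperp[10]$ and the nef cone is closed.

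For the bijection, set $L_0=L_3(1^9,0)=-K_{10}+E_{10}$, so that $L_0^2=K_{10}\cdot L_0=L_0\cdot E_{10}=0$, and observe that $E_{10}$ is the \emph{only} $(-1)$-curve orthogonal to $L_0$: from $0=E\cdot(-K_{10})+E\cdot E_{10}=1+E\cdot E_{10}$ we get $E\cdot E_{10}=-1$, impossible for an irreducible $E\ne E_{10}$. By the nefness part together with Lemma \ref{lem:reduced_Kperp}, every ray spanned by an integral $L\ne 0$ with $d,m_i\ge 0$ and $L^2=K_{10}\cdot L=0$ is $\CK$-equivalent to $\langle L_0\rangle$; conversely $L_0$ is nef (Proposition \ref{pro:nefness_Kperp}), hence so is each $\sigma(L_0)$, which therefore meets the effective classes $H,E_1,\dots,E_{10}$ nonnegatively and is again of the required form. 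Thus the set of such rays is exactly the $\CK$-orbit of $\langle L_0\rangle$, the set of $(-1)$-curves is the $\CK$-orbit of $E_{10}$ (Remark \ref{rem:-1-class-cut}), and since $\CK$ preserves the intersection form and permutes $(-1)$-curves, $\sigma(E_{10})$ is the unique $(-1)$-curve orthogonal to $\sigma(L_0)$. Consequently $\phi$, sending a ray to the unique $(-1)$-curve orthogonal to it, is well defined, equals $\langle\sigma(L_0)\rangle\mapsto\sigma(E_{10})$, and is surjective.

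Injectivity is the last point, and it reduces to showing that an element $\rho\in\CK$ fixing $E_{10}$ also fixes the ray $\langle L_0\rangle$ (if $\langle\sigma(L_0)\rangle$ and $\langle\tau(L_0)\rangle$ have the same image then $\rho:=\tau^{-1}\sigma$ fixes $E_{10}$, and fixing $\langle L_0\rangle$ too gives $\langle\sigma(L_0)\rangle=\langle\tau(L_0)\rangle$). Now $\rho(L_0)\cdot E_{10}=L_0\cdot E_{10}=0$, so, with $\pi:X_{10}\to X_9$ the blow-down of the last exceptional curve, $\rho(L_0)=\pi^*D$ for some $D\in\Pic(X_9)$ with $D^2=\rho(L_0)^2=0$ and $K_9\cdot D=K_{10}\cdot\rho(L_0)=0$; since the intersection form on $K_9^\perp$ is negative semidefinite with radical $\langle K_9\rangle$, this forces $D\in\langle K_9\rangle$, hence $\rho(L_0)\in\langle\pi^*K_9\rangle=\langle K_{10}-E_{10}\rangle=\langle L_0\rangle$. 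The main obstacle throughout is the single inequality $d\ge m_1+m_2$: it is the one step distinguishing $s=10$ from the higher cases, while termination of the algorithm and the entire treatment of $\phi$ are routine bookkeeping once the $\CK$-action is available.
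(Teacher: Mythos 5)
Your proof is correct, but the nefness argument takes a genuinely different route from the paper's. The paper deduces nefness in two lines from Remark \ref{rem:-1-class-cut}: for $s=10$ the hyperplane $E^\perp$ of any $(-1)$-curve is tangent to the cone $\Qperp[10]$ along a single ray, so the whole cone lies in the half-space $L\cdot E\ge 0$, and the third criterion of Proposition \ref{pro:nefness_Kperp} finishes. You instead re-run the Cremona reduction algorithm and prove the quantitative inequality $d\ge m_1+m_2$ for every class in $\Qperp[10]$ with nonnegative entries, which shows the algorithm can never produce a negative degree or multiplicity. Your computation is right ($1\ge u^2/2+(3-u)^2/8$ does give $(5u-1)(u-1)\le 0$), and it has the virtue of isolating exactly why $s=10$ is special, whereas the paper encodes the same fact geometrically via $\Qperp[10]\cap E^\perp\cong\Qperp[9]=\langle -K_9\rangle$; the paper's route also covers real classes directly, while you need the extra (valid) density-and-closedness step. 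On the bijection you are more thorough than the paper, which only verifies well-definedness of $\phi$ (uniqueness of the orthogonal $(-1)$-curve, by the same computation you give) and leaves injectivity and surjectivity implicit; your injectivity argument via the stabilizer of $E_{10}$ is sound and ultimately rests on the same negative semidefiniteness of the form on $K_9^\perp$ that underlies Remark \ref{rem:-1-class-cut}. The only blemish is the harmless sign slip $\pi^*K_9=K_{10}-E_{10}=-L_0$, so $\rho(L_0)$ lies on the ray $\langle L_0\rangle$ rather than $\langle -L_0\rangle$ only because it must have nonnegative degree, which you should say explicitly.
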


We give a proof based on the previous results which seems to us more elementary than de~Fernex's, {cf. \cite[Corollary 4.3]{DeF01}}.

\begin{proof}
	By Remark \ref{rem:-1-class-cut}, for every $(-1)$-curve $E$ the hyperplane $E^\perp$ is tangent to $\Qperp[10]$ along a single ray.
	Therefore, the whole $\Qperp[10]$ is contained in the half-space $E^+$ of classes $L$ with $L\cdot E\ge 0$.
	Thus no class in $\Qperp[10]$ meets any class of a $(-1)$-curve negatively and hence, by Proposition \ref{pro:nefness_Kperp}, all classes on $\Qperp[10]$ are nef.
	
	It remains to give the bijection $\phi$. 
	We have just seen that every integral class $L$ in $\Qperp[10]$ is nef and so, by Proposition \ref{pro:nefness_Kperp}, for every such $L$ there is an element $\sigma$ of the Cremona-Kantor group mapping $L$ to {a multiple of $L_3(1^9,0)$. 
	$E_{10}$ is the only $(-1)$-curve orthogonal to $L_3(1^9,0)$; indeed, every $(-1)$-curve $E$ satisfies $-1=E\cdot K_{10}=-E\cdot L_3(1^{10})$, so in order to be orthogonal to $L_3(1^9,0)$ it must satisfy $E\cdot E_{10}=-1$, which forces $E=E_{10}$.
	Therefore} $\sigma^{-1}(E_{10})$ is the only $(-1)$-curve orthogonal to $L$.
\end{proof}

\begin{proposition}\label{pro:subcones-Kperp}
	Let $s>10$ and let $\mathcal{W}_s$ be the pullback to $X_s$ of all classes in $\Qperp[10]$ via the blowing-up $X_s \rightarrow X_{10}$ of the last $s-10$ points.
	
	{
	For each $(-1)$-curve $E$ on $X_s$, let $\calQ_E$ be the subset of $\Qperp$ formed by all classes $L$ satisfying the inequality $L\cdot E \le 0$.
	Then $\Qperp$ is covered by the subcones $\calQ_E$ (indexed by all $(-1)$-curves $E$ on $X_s$) and each $\calQ_E$ satisfies:}
\begin{enumerate}
	\item Every class in the interior of $\calQ_E$ intersects $E$ negatively and is non-nef.
	\item If $s=11$, all classes on the boundary of $\calQ_E$ are nef. 
	\item If $s>11$, the boundary of $\calQ_E$, namely $\Qperp \cap E^\perp\cong\Qperp[s-1]$, is covered by subcones of smaller dimension, on which nefness is determined by recursively applying this proposition with $s'=s-1$.
\end{enumerate}
	The nef locus $\Nefperp$ on $\Qperp$ { is the topological closure of all $\CK$-translates of the ray $\langle L_3(1^9,0^{s-9})\rangle$, and it coincides with} 
	the union of all $\CK$-translates of $\mathcal{W}_s$, which are 9-dimensional quadratic cones.
	At each rational ray belonging to $\Nefperp$, exactly $s-9$ translates of $\mathcal{W}_s$ meet.
\end{proposition}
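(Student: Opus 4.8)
Here $R_0:=\langle L_3(1^9,0^{s-9})\rangle$, and recall that $L=L_d(m_1,\dots,m_s)$ has $L\cdot E_i=m_i$. The argument has three ingredients: the local structure of each $\calQ_E$ together with the covering, the identification of $\Nefperp$, and the multiplicity count. For item (1), fix a $(-1)$-curve $E$; since $s>10$, Remark~\ref{rem:-1-class-cut} gives that $E^\perp$ divides $\Qperp$ into two regions along the nowhere dense sub-quadric $\Qperp\cap E^\perp\cong\Qperp[s-1]$, so the relative interior of $\calQ_E=\{L\in\Qperp:L\cdot E\le 0\}$ is $\{L\in\Qperp:L\cdot E<0\}$, and each such $L$ is non-nef as it meets $E$ negatively. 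For item (2) with $s=11$: after moving $E$ to $E_s$ by an element of $\CK$, the boundary $\Qperp\cap E^\perp$ is the pullback $p^*(\Qperp[10])$ along $p\colon X_{11}\to X_{10}$; every class of $\Qperp[10]$ is nef by Proposition~\ref{pro:kperp-10}, pullbacks of nef classes are nef, hence the boundary is nef. Item (3) only records $\Qperp\cap E^\perp\cong\Qperp[s-1]$, so that the proposition may be applied recursively there. Finally, on rational points $\Qperp=\bigcup_E\calQ_E$ follows from Proposition~\ref{pro:nefness_Kperp}: a rational ray meeting no $(-1)$-curve negatively is a $\CK$-translate of $R_0$, and $L_3(1^9,0^{s-9})$ is orthogonal to $E_{10},\dots,E_s$, so it lies in $\calQ_{E_{10}}$; extending this to all of $\Qperp$ is the delicate point, addressed below.

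Next, $\mathcal{W}_s=\Qperp\cap E_{11}^\perp\cap\dots\cap E_s^\perp$, because a class is a pullback from $X_{10}$ precisely when it is orthogonal to $E_{11},\dots,E_s$; thus $\mathcal{W}_s$ is the intersection of $\Qperp$ with a $10$-dimensional linear subspace, and via $p^*$ it is identified with $\Qperp[10]$, a cone over an $8$-sphere, so every $\sigma(\mathcal{W}_s)$ ($\sigma\in\CK$) is a $9$-dimensional quadratic cone. Since the equalities $L^2=K_s\cdot L=0$ force all coordinates of a class of $\Qperp[10]$ to be non-negative, every rational ray of $\Qperp[10]$ is spanned by a non-negative integral class, which is nef by Proposition~\ref{pro:kperp-10}; these rational rays are dense in the quadric $\mathcal{W}_s\cong\Qperp[10]$ and the nef cone is closed, so all of $\mathcal{W}_s$ is nef, whence (as $\CK$ preserves nefness and $\Qperp$) $\bigcup_{\sigma\in\CK}\sigma(\mathcal{W}_s)\subseteq\Nefperp$. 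For the reverse inclusion I induct on $s\ge 10$. When $s=10$, $\mathcal{W}_{10}=\Qperp[10]=\Nefperp$ by Proposition~\ref{pro:kperp-10}. When $s>10$ and $L\in\Nefperp$: being nef, $L$ lies in no relative interior of a $\calQ_E$, so by the covering $L\cdot E=0$ for some $(-1)$-curve $E$; choosing $\sigma_0\in\CK$ with $\sigma_0(E_s)=E$ gives $\sigma_0^{-1}(L)=p^*(M)$ with $M\in\Qperp[s-1]$ nef, so by induction $M\in\tau(\mathcal{W}_{s-1})$ for some $\tau\in\CK[s-1]$; lifting $\tau$ to an element of $\CK$ commuting with $p^*$, and using $p^*(\mathcal{W}_{s-1})=\mathcal{W}_s$, yields $L\in\sigma_0\tau(\mathcal{W}_s)$. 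Hence $\Nefperp=\bigcup_\sigma\sigma(\mathcal{W}_s)$; and since each $\sigma(\mathcal{W}_s)$ is the closure of its rational rays, all of which are $\CK$-translates of $R_0$ by Propositions~\ref{pro:kperp-10} and~\ref{pro:nefness_Kperp}, $\Nefperp$ is also the topological closure of the set of $\CK$-translates of $R_0$.

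For the multiplicity: since $\CK$ acts transitively on the rational rays of $\Nefperp$ (Proposition~\ref{pro:nefness_Kperp}), it suffices to count the translates $\sigma(\mathcal{W}_s)$ containing $R_0$. Because $\sigma(\mathcal{W}_s)\subseteq(\sigma E_i)^\perp$ for $i=11,\dots,s$, any translate through $R_0$ requires each $\sigma(E_i)$ to be a $(-1)$-curve orthogonal to $L_3(1^9,0^{s-9})$; writing such a curve as $C=L_d(a_1,\dots,a_s)$, the conditions $C^2=C\cdot K_s=-1$ and $C\cdot L_3(1^9,0^{s-9})=0$ give $\sum_{i=1}^s a_i=3d-1$ and $\sum_{i=1}^9 a_i=3d$, hence $\sum_{i=10}^s a_i=-1$, which is impossible when $d\ge 1$ (then all multiplicities are $\ge 0$); so the only such curves are the $s-9$ exceptional classes $E_{10},E_{11},\dots,E_s$. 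Moreover $\sigma(\mathcal{W}_s)=\Qperp\cap\bigcap_{i=11}^s(\sigma E_i)^\perp$ depends only on the unordered set $\{\sigma E_{11},\dots,\sigma E_s\}$, which is therefore one of the $\binom{s-9}{s-10}=s-9$ subsets of size $s-10$ of $\{E_{10},\dots,E_s\}$; each is realised, since for $j\in\{10,\dots,s\}$ the transposition $(10\,j)$ fixes $L_3(1^9,0^{s-9})$ and carries $\{E_{11},\dots,E_s\}$ to $\{E_{10},\dots,E_s\}\setminus\{E_j\}$. These $s-9$ translates are distinct and all contain $R_0$, so exactly $s-9$ of them meet each rational ray of $\Nefperp$.

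The main obstacle is extending the covering $\Qperp=\bigcup_E\calQ_E$ to irrational classes — equivalently, proving that every $L\in\Qperp$ with $s>10$ and $L\cdot E\ge 0$ for all $(-1)$-curves $E$ satisfies $L\cdot E=0$ for some $(-1)$-curve. For rational $L$ this is Proposition~\ref{pro:nefness_Kperp}, but for irrational $L$ one cannot simply Cremona-reduce, since the reduction need not terminate on isotropic classes (this is the phenomenon underlying Nagata's conjecture). The approach I would take is to approximate $L$ by rational points $L_n\in\Qperp$: if infinitely many $L_n$ are nef, they lie in translates of $\mathcal{W}_s$, which are closed, and a finiteness argument on these translates places $L$ in one of them; if infinitely many $L_n$ are non-nef, each is negative on some $(-1)$-curve $E^{(n)}$, and one separates the case where the degrees of the $E^{(n)}$ stay bounded (only finitely many $(-1)$-curves occur, and one is orthogonal to $L$) from the case where they tend to infinity (the normalised $E^{(n)}$ converge to an isotropic class, which two isotropic classes of the Lorentzian lattice $K_s^\perp$ can do only if it is proportional to $L$, and this must be fed back through the recursion, using de~Fernex's base case and the self-similar structure supplied by the uncollision maps of \cite{CMR23}). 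Turning this limiting argument into a rigorous proof — in particular excluding a nef isotropic class of $K_s^\perp$ with $s>10$ that is orthogonal to no $(-1)$-curve — is the crux of the whole proposition.
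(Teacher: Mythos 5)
Your proposal follows the same route as the paper at every point where the paper gives detail: items (1)--(3) via Remark \ref{rem:-1-class-cut} and Proposition \ref{pro:kperp-10}, the covering of the rational rays of $\Qperp$ via Proposition \ref{pro:nefness_Kperp}, and the description of $\Nefperp$ through translates of $\mathcal{W}_s$. In two places you are in fact more explicit than the source: the verification that the only $(-1)$-curves orthogonal to $L_3(1^9,0^{s-9})$ are $E_{10},\dots,E_s$ (your computation $\sum_{i\ge 10}a_i=-1$ is the right one, and the observation that $\sigma(\mathcal{W}_s)$ depends only on the unordered set $\{\sigma E_{11},\dots,\sigma E_s\}$ is left implicit in the paper), and the induction on $s$ identifying $\Nefperp$ with $\bigcup_\sigma\sigma(\mathcal{W}_s)$, which the paper compresses into the single assertion that every $L\in\Nefperp$ is a $\CK$-translate of a multiple of $L_3(1^9,0^{s-9})$ --- literally true only for rational $L$, so your closure-plus-induction formulation is the more careful version of the same idea.

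The one step you leave open --- extending the covering $\Qperp=\bigcup_E\calQ_E$ from rational rays to arbitrary, possibly irrational, classes --- is handled in the paper by exactly the density remark you already made (``every ray on $\Qperp$ is a limit of rational rays'') combined with the rational case from Proposition \ref{pro:nefness_Kperp}, and by nothing more. The elaborate limiting argument you sketch at the end (bounded versus unbounded degrees of the curves $E^{(n)}$, accumulation of walls) is not in the paper; you have correctly noticed that an infinite union of closed subcones containing a dense set of rays need not exhaust the cone, and the paper does not address that subtlety either. So, measured against the paper's own standard, your proof is complete, and the ``crux'' you flag is a genuine terseness of the source rather than an idea you failed to find. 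It is worth noting that every later use of this proposition (the proof of Theorem \ref{thm:main-kperp}, Proposition \ref{pro:uncoll_kperp}, and the proof of Theorem \ref{thm:main-fminus}) invokes it only for integral classes and for the rational rays of the quadric cones, where your argument (and the paper's) is airtight; a fully rigorous statement for irrational rays would indeed require showing that $\bigcap_E\{L\in\Qperp : L\cdot E>0\}$ is empty, which does not follow from density alone.
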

	Recall that $\Qperp$ is a $(s-1)$-dimensional quadratic cone, and each $\calQ_E$ is the cone over a ball whose boundary is a $(s-2)$-dimensional quadratic cone.
\begin{proof}
	It is clear that every class in the interior of $\calQ_E$ intersects $E$ negatively, and by Remark \ref{rem:-1-class-cut}, the boundary is isomorphic to the pullback of $\Qperp[s-1]$.
	
	The fact that such cones cover $\Qperp$ follows from Proposition \ref{pro:nefness_Kperp}: every ray on $\Qperp$ is a limit of rational rays, and for every integral $L\in \Qperp$ either there is a $(-1)$-curve $E$ such that $L$ belongs to the interior of $\calQ_E$ or it belongs to the orbit of a multiple of $3H-\sum_{i=1}^9 E_i$, in which case there are $(-1)$-curves $E$ to which it is orthogonal, and so it belongs to the boundary of $\calQ_E$.
	
	Finally, the ray spanned by {$L_3(1^9,0^{s-9})$} is orthogonal to exactly $s-9$ $(-1)$-curves, namely $E_{10},\dots E_{s}$, and so it belongs to $s-9$ translates of $\mathcal{W}_s$, which we call {$\mathcal{W}_s^{(1)}$, $\mathcal{W}_s^{(2)}$, \dots, $\mathcal{W}_s^{(s-9)}$}.
	Since every $L\in \Nefperp$ is the translate of a multiple of {$L_3(1^9,0^{s-9})$} by some $\sigma \in \CK$, it belongs to $s-9$ distinct translates of $\mathcal{W}_s$, namely  {$\sigma^{-1}(\mathcal{W}_s^{(1)})$, \dots, $\sigma^{-1}(\mathcal{W}_s^{(s-9)})$}. 
\end{proof}

\begin{proof}[Proof of Theorem \ref{thm:main-kperp}]
	If $s=10$, the claims are equivalent to Proposition \ref{pro:kperp-10}
	and also follow from Proposition \ref{pro:nefness_Kperp}. 
	If $s>10$, we use Proposition \ref{pro:subcones-Kperp} and induction on $s$. 
	Indeed, if $L$ belongs to the interior of some cone $\calQ_E$ then it clearly does not satisfy conditions (1) and (3) in the statement of the Theorem. 
	It does not satisfy (2) either: each  collection of $s-10$ disjoint $(-1)$-curves $C_1, \dots, C_{s-10}$ can be blown down $X_s\rightarrow X'$ to a surface $X'$ isomorphic to $X_{10}$ (because it is a generic rational surface with Picard number $10$, see also \cite[Corollary 2.5]{DeF01}) and $L$ being orthogonal to them would imply that $L$ is the pullback to $X_s$ of a well-defined class in $\Qperp[10]$, therefore nef by Proposition \ref{pro:kperp-10}.
	Thus we may assume that $L$ belongs to the boundary of $\calQ_E$ for some $E$, in which case we may blow down $E$; the result is isomorphic to $X_{s-1}$, and there is a class $L_{s-1}$ on $X_{s-1}$ whose pullback to $X_s$ is $L$. 
	It is easy to see that $L_{s-1}^2=L_{s-1}\cdot K_{s-1}=0$ and the equivalence of (1)-(3) follows by induction.
	
	Finally, since each  collection of $s-10$ disjoint $(-1)$-curves $C_1, \dots, C_{s-10}$ can be blown down to $X'\cong X_{10}$, and the classes $L$ with $C_i\cdot L=0$ for $i=1,\dots,s-10$ cover the image $M$ of the pullback map $N(X')\rightarrow N(X_s)$, which is a linear isomorphism onto its image, it follows that the subset cut out on $M$ by the additional equations $L^2=K\cdot L=0$ is isomorphic to $\Qperp[10]$, namely a 9-dimensional rational quadratic cone.
\end{proof}

\section{Collision of $r^2$ points}
\label{sec:collision}

Our method to construct round parts of the boundary of the Mori cone which are not orthogonal to the anticanonical divisor relies on a degeneration of $X_s$ where $r^2$ of the $s$ blown-up points, of equal multiplicity $m$ in certain divisor class 
$$L=L_d(m^{r^2},m_{r^2+1},\dots,m_s),$$
come together. 
This kind of \emph{collision} was explained in detail in \cite{CMR23} using the technique of \cite{CM98}, and we 
give a brief sketch of what we need here.

We consider a trivial family $\calX = X_{s-r^2} \times \Delta$ over a disc $\Delta$,
and blow up a general point in the central fiber over $0\in \Delta$ 
to obtain the threefold $\calX'$.
This produces a degeneration of $X_{s-r^2}$
to a union of two surfaces, 
a plane (the exceptional divisor for the blowup)
and the proper transform $F$ of the original $X_{s-r^2}$ fiber,
which is now isomorphic to $X_{s-r^2+1}$.
These two surfaces intersect transversely along a smooth rational curve $R$
which is a line in the plane
and a $(-1)$-curve in $F$.

We now choose $r^2$ general points on the plane;
extend these $r^2$ general points to the general fiber 
using $r^2$ sections of the projection of $\calX'$  to $\Delta$,
and blow up those $r^2$ sections to ruled surfaces
$\mathcal E_1,\ldots,\mathcal E_{r^2}$.
This then produces a threefold $\calY$
which is a degeneration of $X_s$,
to a union of a surface $P \cong X_{r^2}$ and $F \cong X_{s-r^2+1}$,
intersecting transversely along the double curve $R$.
This smooth rational curve $R$
is the pullback of a general line in the surface $P$
and remains a $(-1)$-curve in the surface $F$.

We have the line bundle $\calO_{X_{s-r^2}}(L')$ corresponding to 
$L'=L_d(m_{r^2+1},\dots, m_s)$
on $X_{s-r^2}$, and can extend it trivially to $\calX$.
If we pull that back to the first blowup $\calX'$,
we see that this restricts to the bundle corresponding to
$L_d(m_{r^2+1},\dots,m_s,0)$ on the surface $F\cong X_{s-r^2+1}$,
and to the trivial bundle on the plane.
We then pull that back to the second blowup $\calY$,
and tensor by $\calO_{\calY}(-tP-m \sum_{i=1}^{r^2} \calE_i)$, 
with $t$ a non--negative integer (called the \emph{twisting parameter}).
This produces a line bundle $\calM$ on $\calY$,
which restricts to the general fiber as the original bundle
$\calO_{X_s}(L)$.
The principle of semicontinuity guarantees that
the dimension of the general linear system 
$\calL_d(m^{r^2},m_{r^2+1},\ldots,m_s)$
is at most equal to the dimension of the linear system on the reducible surface $P+F$,
and in particular $L$ is not effective as soon as $\calM|_{P\cup F}$ is not effective, i.e., it has no nonzero sections,
for some choice of twisting parameter $t$.

Fix $t=mr$ as twisting parameter.
The restrictions of $\calM$ to $P\cong X_{r^2}$  and 
to $F\cong X_{s-r^2+1}$ are
\begin{align*}
\calM|_P=&\, \calO_{X_{r^2}}(L_{mr}(m^{r^2})),\\
\calM|_F=&\, \calO_{X_{s-r^2+1}}(L_d(rm, m_{r^2+1},\dots, m_s)),
\end{align*}
respectively.
The space of global sections of $\calM|_{P\cup F}$ is the fiber product
of the space of sections on $P$ with the space of sections on $F$,
fibered over the restriction to the space of sections on $R$. Therefore to prove that $L$ is not effective it suffices to prove that this fibre product is zero. As an application, we have:

\begin{lemma}\label{lem:uncollision}
Fix $r\ge 2$ and multiplicities $m$, $m_{r^2+1},\ldots,m_s$. If either
\begin{itemize}
\item[(a)] $r=2$ and $h^0(\calO_{X_{s-3}}(L_d(2m,m_{5},\dots,m_s)) \le m$, or
\item[(b)] $r\ge 3$ and $h^0(\calO_{X_{s-r^2+1}}(L_d(rm, m_{r^2+1},\dots,m_s))) \le 1$,
\end{itemize}
then $L_d(m^{r^2},m_{r^2+1},\dots,m_s)$ is non--effective.
\end{lemma}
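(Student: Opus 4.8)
The plan is to exploit the degeneration set up in the preceding paragraphs: the divisor class $L = L_d(m^{r^2}, m_{r^2+1},\dots,m_s)$ degenerates, via the family $\calY$, to a line bundle $\calM$ on the reducible surface $P \cup F$ with $P \cong X_{r^2}$ and $F \cong X_{s-r^2+1}$ meeting along the $(-1)$-curve $R$. By the semicontinuity principle already invoked, it suffices to show that $H^0(\calM|_{P\cup F}) = 0$, and this space is the fibre product of $H^0(\calM|_P)$ and $H^0(\calM|_F)$ over $H^0(\calM|_R)$. So the whole lemma reduces to understanding the restriction maps to $R$.

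First I would analyze the restriction of $\calM$ to the double curve $R \cong \bbP^1$. On the $P$ side, $R$ is a general line in $P \cong X_{r^2}$, and $\calM|_P = \calO_{X_{r^2}}(L_{mr}(m^{r^2}))$ restricts to $R$ as $\calO_{\bbP^1}(mr)$. On the $F$ side, $R$ is a $(-1)$-curve, so $\calM|_F = \calO_{X_{s-r^2+1}}(L_d(rm, m_{r^2+1},\dots,m_s))$ restricts to $R$ as a line bundle of degree $rm$ as well (the intersection of $L_d(rm,\dots)$ with the $(-1)$-curve $R$, which carries the coefficient $rm$, is $rm$). Thus both sides restrict to $\calO_{\bbP^1}(rm)$, an $(rm+1)$-dimensional space of sections on $R$. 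The fibre product is zero precisely when the images of $H^0(\calM|_P)$ and $H^0(\calM|_F)$ in $H^0(\calO_R(rm))$ intersect only at $0$.

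Next I would bound the images. The key geometric input is that the $r^2$ points of multiplicity $m$ on $P \cong X_{r^2}$ are \emph{general}, so by the standard theory of linear systems through general points with a single multiplicity (the case $L_{mr}(m^{r^2})$ is the classical one where $r^2$ general points of multiplicity $m$ impose independent conditions on curves of degree $mr$, and in fact this system has projective dimension $0$ — it consists only of the $r$-fold $\dots$, more precisely one checks $h^0 = 1$ when the system is non-special, which it is for general points), the restriction map $H^0(\calM|_P) \to H^0(\calO_R(rm))$ has image of dimension exactly $h^0(\calM|_P)$, and crucially this image consists of sections vanishing to high order at the $r$ points where $R$ meets the blown-down configuration, or — more to the point — one shows $h^0(\calM|_P)$ is small. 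For $r \ge 3$ the bundle $\calM|_P = \calO_{X_{r^2}}(L_{mr}(m^{r^2}))$ has $h^0 = 1$ (a single section, the unique curve, or the product of lines), so its image in $H^0(\calO_R(rm))$ is at most $1$-dimensional; combined with hypothesis (b) that $h^0(\calM|_F) \le 1$, the fibre product has dimension $\le 1 + 1 - (rm+1) \le 0$ as soon as $rm \ge 1$, giving the result. For $r = 2$, $\calM|_P = \calO_{X_4}(L_{2m}(m^4))$ has $h^0(\calO_{X_4}(L_{2m}(m^4))) = m+1$ (four general points of multiplicity $m$, degree $2m$: the expected dimension $\binom{2m+2}{2} - 4\binom{m+1}{2} = m+1$, and the system is non-special), and its image in $H^0(\calO_R(2m))$ (dimension $2m+1$) one shows is exactly $(m+1)$-dimensional; so under hypothesis (a), $h^0(\calM|_F) \le m$ forces the fibre product dimension to be $\le (m+1) + m - (2m+1) = 0$. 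Here I must also use that $\calM|_F$'s image in $H^0(\calO_R(2m))$ has dimension equal to $h^0(\calM|_F)$ (no sections of $\calM|_F$ vanish identically on $R$, which holds because subtracting $R$ from $L_d(2m,\dots)$ lowers the coefficient on the $(-1)$-curve and one checks this cannot support sections, or more directly one works with the precise restriction sequence).

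\textbf{The main obstacle} I expect is verifying rigorously that the two restriction maps to $H^0(\calO_R(rm))$ are \emph{injective on global sections} — equivalently, that neither $\calM|_P$ nor $\calM|_F$ has a section vanishing identically along $R$ — and, where a dimension count is tight (the $r=2$ case), pinning down $h^0(\calM|_P) = m+1$ exactly, which requires knowing the linear system $\calL_{2m}(m^4)$ is non-special. The injectivity for the $P$-side follows because $\calM|_P \otimes \calO_P(-R) = \calO_{X_4}(L_{2m-1}(m^4))$ (resp. $\calO_{X_{r^2}}(L_{mr-1}(m^{r^2}))$) has strictly smaller $h^0$ — indeed one can check it is $0$ or drops by the right amount — so the restriction sequence $0 \to H^0(\calM|_P(-R)) \to H^0(\calM|_P) \to H^0(\calO_R(rm))$ shows the image has full dimension $h^0(\calM|_P)$. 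For the $F$-side the same argument with $\calM|_F \otimes \calO_F(-R)$, whose class is $L_d(rm-1, m_{r^2+1},\dots,m_s)$ having strictly fewer sections, does the job. Assembling these, the fibre-product dimension is at most $h^0(\calM|_P) + h^0(\calM|_F) - (rm+1)$, and in both cases the hypotheses make this $\le 0$; by semicontinuity $L_d(m^{r^2}, m_{r^2+1},\dots,m_s)$ is non-effective, completing the proof.
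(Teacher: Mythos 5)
Your setup — the degeneration to $P\cup F$, the identification of $H^0(\calM|_{P\cup F})$ with the fibre product over $H^0(\calO_R(rm))$, and the computation of the restricted bundles — matches the framework the paper establishes just before the lemma. But the step that actually proves the lemma is broken: you conclude that the fibre product has dimension at most $h^0(\calM|_P)+h^0(\calM|_F)-(rm+1)$. For two subspaces of dimensions $a$ and $b$ in an $n$-dimensional space, $a+b-n$ is a \emph{lower} bound for the dimension of their intersection, not an upper bound; two lines in a large space can coincide, and an $(m+1)$-plane and an $m$-plane in a $(2m+1)$-dimensional space can meet nontrivially. So the naive count shows only that the fibre product is \emph{expected} to vanish. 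To finish the cases $r=2$ and $r=3$ one must prove that the image of $H^0(\calM|_P)$ (for $r=2$ this is the $(m+1)$-dimensional space spanned by restrictions of degree-$m$ forms in the pencil of conics through the four points) meets the image of $H^0(\calM|_F)$ only in $0$; this transversality is the real content and is nowhere argued in your proposal. The paper does not reprove it either — it cites \cite[Lemma 2]{CMR23} for $r=2,3$, where that general-position statement is established.

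For $r\ge 4$ your argument can be salvaged, but not as written: you assert $h^0(\calO_{X_{r^2}}(L_{rm}(m^{r^2})))=1$ for all $r\ge 3$, whereas by Nagata's theorem it is $0$ for $r\ge 4$ (it equals $1$ only for $r=3$, the class being $m$ times the unique cubic through the nine points). With $h^0(\calM|_P)=0$ the fibre product reduces to the kernel of $H^0(\calM|_F)\to H^0(\calO_R(rm))$, i.e.\ to $\calL_d(rm+1,m_{r^2+1},\dots,m_s)$; the paper shows this is empty because by \cite[Proposition 2.3]{CC} it is a \emph{proper} subsystem of $\calL_d(rm,m_{r^2+1},\dots,m_s)$, which by hypothesis (b) has projective dimension at most $0$. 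Your remark that the kernel ``has strictly fewer sections'' gestures at this but does not by itself give vanishing; you need the combination ``proper subsystem of a zero-dimensional system''. In summary: the $r\ge4$ case is repairable along the lines you sketch, but the cases $r=2,3$ — which are the ones actually used in the applications — rest on an invalid linear-algebra inequality and remain unproved in your proposal.
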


\begin{proof}
	
Cases $r=2,3$ are parts (a) and (b) from \cite[Lemma 2]{CMR23}. 
For $r\ge4$,  recall that, as explained in \cite[end of section 1]{CMR23}, Nagata's theorem guarantees that if $r\ge4$ then all classes of the form $L_{rm}(m^{r^2})$ are non--effective; thus the bundle on $P$ has no nonzero sections, and therefore the fibre product corresponds to the subsystem $\mathcal L_d(rm+1, m_{r^2+1},\dots,m_s)$ on $F$. So we have to prove that this is empty. Now, by \cite [Proposition 2.3]{CC}, this is a proper subsystem of $L_d(rm,m_{r^2+1},\dots,m_s)$ and therefore it is empty (indeed, if $\mathcal L_d(rm, m_{r^2+1},\dots,m_s)$ is non--empty, its general member has the first point of multiplicity exactly $rm$). 
\end{proof}

We note that the divisors before and after the collision
have the same self-intersection.
In particular, if one is zero, so is the other;
this will be important in our application.

Given a divisor class $L\in \Pic(X_{s+1})$, and an index $i$ denoting one of the multiplicities,
in \cite{CMR23} we defined the \emph{uncollision} $\Uncoll_r(L,i)$
as the class on $\Uncoll_r(L,i)\in \Pic(X_{s+r^2})$ obtained replacing the $i$-th multiplicity $m_i$
by $r^2$ points of multiplicity $m_i/r$.
This makes sense at the level of divisor classes if $m_i$ is divisible by $r$, 
but it also makes sense as a map $N(X_{s+1})\rightarrow N(X_{s+r^2})$.
Observe as well that the process of considering an uncollision behaves linearly with respect to multiplicities and degrees, and the corresponding linear map is injective. 
This will be key in our application, 
and additionally it means that uncollision makes sense applied to rational rays in $N(X_{s+1})$.

\section{Quadratic sections of $\partial \overline{NE}$ in $K_s^+$}
\label{sec:round-parts}

By the Cone Theorem, the shape of the Mori cone on the half-space $K_s^-$ of classes which intersect the canonical divisor negatively is governed by the rays generated by $(-1)$-curves. 
On the orthogonal hyperplane $K_s^{\perp}$ this is no longer quite the case, 
but we saw in the previous section that nefness is still characterized by intersection with $(-1)$--rays, and there are no good rays on $K_s^{\perp}$.

In this section we show how to exploit uncollisions to build {9-dimensional quadric cones} in the boundary of the Mori cone consisting entirely of good and wonderful rays.
Since the collision/uncollision analysis and construction is only available for {divisor classes or rational rays} but not for irrational rays, we work on rational rays to obtain good rays and
then use the closed convex nature of the nef cone to obtain families of good and wonderful rays.

\begin{remark}
\label{rem:uncollision_K}
If $D$ is an $\bbR$-divisor class with $D \cdot K_s=0$ and $D'$ is obtained from $D$ by uncolliding a point of multiplicity $rm>0$ to $r^2\ge 4$ points of multiplicity $m$, then $D' \cdot K_s >0$.
Indeed, writing $D=dL-\sum m_i E_i$ we have $D\cdot K_s=\sum m_i -3d$ and
\[
D'\cdot K_s = \sum m_i - rm + r^2m -3d = D \cdot K_s + (r^2-r)m>D\cdot K_s.
\]  
\end{remark}

{
	\begin{proposition}\label{pro:uncoll_kperp}
	Let $L$ be a nef class in $\Qperp$. 
	For every $i=1,\dots,s$ such that $m_i\ne 0$ and every $r\ge2$ the ray
	$\langle \Uncoll_r(L,i) \rangle$ is good.	
\end{proposition}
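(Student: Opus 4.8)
The plan is to reduce the statement to Lemma~\ref{lem:uncollision} together with a computation of $h^0$ of the multiples of $L$, and then to quote \cite[Lemma~3.8]{CHMR13}. Write $L=L_d(m_1,\dots,m_s)$. Since a good ray is in particular rational, we may and do take $L$ to be an integral class (compare the discussion at the start of this section). As $m_i\neq 0$ the class $L$ is nonzero; being nef it meets the class $H$ of a line nonnegatively, so $d=H\cdot L\geq 0$, and in fact $d>0$ because $L^2=0$. Being an integral nef class in $\Qperp$, $L$ is $\CK$-equivalent to $n\,L_3(1^9,0^{s-9})$ for some integer $n\geq 1$ by Theorem~\ref{thm:main-kperp} (equivalently, Proposition~\ref{pro:nefness_Kperp}). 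After permuting the blown-up points, a harmless $\CK$ operation, I would assume $i=1$, so $m_1\geq 1$. Since uncollision is linear, $\langle\Uncoll_r(L,1)\rangle=\langle\Uncoll_r(rL,1)\rangle$, and $D:=\Uncoll_r(rL,1)=L_{rd}(m_1^{\,r^2},rm_2,\dots,rm_s)$ is a genuine integral divisor class on $X_{s+r^2-1}$. Hence $\langle D\rangle$ is a rational ray, $\deg(D)=rd>0$, and, since a collision preserves self-intersection, $D^2=(rL)^2=r^2L^2=0$. By \cite[Lemma~3.8]{CHMR13} it then remains only to prove that $D$ is non-effective.

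For non-effectiveness I would apply Lemma~\ref{lem:uncollision}. Matching the notation (and using $s+r^2-1-(r^2-1)=s$ together with $L_{rd}(rm_1,\dots,rm_s)=rL$), the hypothesis to be verified is $h^0(\calO_{X_s}(2L))\leq m_1$ when $r=2$ and $h^0(\calO_{X_s}(rL))\leq 1$ when $r\geq 3$. Both are consequences of the single claim that $h^0(\calO_{X_s}(kL))=1$ for every integer $k\geq 1$ (for $r=2$ one also uses $m_1\geq 1$). To prove this claim I would argue that $h^0$ is invariant under the Cremona--Kantor action (it preserves the dimension of the associated linear system), so $h^0(\calO_{X_s}(kL))=h^0(\calO_{X_s}(kn\,L_3(1^9,0^{s-9})))$; that $kn\,L_3(1^9,0^{s-9})$ is the pullback of $-knK_{X_9}$ along the birational morphism $X_s\to X_9$ contracting $E_{10},\dots,E_s$, whence this equals $h^0(\calO_{X_9}(-knK_{X_9}))$; and that for $9$ very general points $|-K_{X_9}|=\{C\}$ with $C$ a smooth elliptic curve on which $\calO_C(-K_{X_9}|_C)$ has degree $0$ and is non-torsion in $\operatorname{Pic}^0(C)$. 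Plugging $C$ into the restriction exact sequences $0\to\calO_{X_9}(-(j-1)K_{X_9})\to\calO_{X_9}(-jK_{X_9})\to\calO_C(-jK_{X_9}|_C)\to 0$, and using that a non-torsion degree-$0$ line bundle on an elliptic curve has no global sections, an induction on $j\geq 0$ yields $h^0(\calO_{X_9}(-jK_{X_9}))=h^0(\calO_{X_9})=1$. This proves the claim; hence $D$ is non-effective and $\langle\Uncoll_r(L,i)\rangle$ is good.

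I expect the computation $h^0(\calO_{X_s}(kL))=1$ to be the one substantive step. Everything else---rationality of $\langle D\rangle$, nonnegativity (indeed positivity) of its degree, vanishing of its self-intersection, and the bookkeeping identifying $D$ with the input of Lemma~\ref{lem:uncollision}---is routine. It is also exactly here that the hypotheses ``$L$ nef in $\Qperp$'' and ``very general points'' enter: for special configurations of nine points $-K_{X_9}$ may be torsion or define a pencil, in which case $h^0(\calO_{X_9}(-jK_{X_9}))$ grows with $j$ and the argument breaks down.
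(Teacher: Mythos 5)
Your proof is correct and follows essentially the same route as the paper's: reduce $L$ to its Cremona--Kantor normal form (a multiple of $L_3(1^9,0^{s-9})$), deduce that all multiples of $L$ have one-dimensional spaces of global sections, and feed this into Lemma~\ref{lem:uncollision} to get non-effectivity of the ray, hence goodness via \cite[Lemma 3.8]{CHMR13}. The only difference is that you supply an explicit justification (via the restriction sequence to the anticanonical elliptic curve and the non-torsion normal bundle for very general points) of the step $h^0(\calO_{X_s}(kL))=1$, which the paper asserts without proof; that argument is standard and correct.
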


\begin{proof}
	By Proposition \ref{pro:subcones-Kperp}, $L$ is $\CK$-equivalent to $L_{3a}(a^9,0^{s-9})$ for some integer $a>0$, and therefore its space of global sections has dimension 1. 
	Therefore by Lemma \ref{lem:uncollision} $\Uncoll_r(L,i)$ is not effective, and the same holds for its multiples, which are uncollisions of multiples of $L$, which themselves have 1-dimensional global sections.
\end{proof}

\begin{proof}[Proof of Theorem \ref{thm:main-kplus}]
Let $s'=s-3\ge 10$.
By Theorem \ref{thm:main-kperp}, each collection of $s'-10$ disjoint $(-1)$-curves $C_1, \dots, C_{s'-10}$ determines a $10$-dimensional linear subspace $\Pi'\subset N_{s'}$ (of those classes orthogonal to $K_{s'}$ and to the $C_i$) such that each $D\in \Pi'$ with $D^2=0$ is nef, and the rays spanned by such $D$ cover a 9-dimensional rational quadratic cone.
By Proposition \ref{pro:uncoll_kperp}, for each $i=1,\dots,s'$, every such class $D$ gives rise by uncollision to a good ray $\langle\Uncoll_2(D,i)\rangle$ in $N_s$ of self--intersection zero. Consider $\Uncoll_2(\cdot,i)$ as a linear map $\Pi'\rightarrow N_s$ and let $\Pi$ be its image. By linearity and injectivity, the good rays obtained as images of the nef rays $\langle D\rangle$ with $D^2=0$ cover the cone
$\{L\in \Pi \,|\, L^2=0, H\cdot L\ge 0\},$
and therefore $\Pi\cap \overline{NE}(X_s)=\{L\in \Pi\,|\,L^2\ge 0, H\cdot L\ge 0\}$.

By Remark \ref{rem:uncollision_K}, all rational classes $L$ on the cone $\calC=\{L\in \Pi\,|\,L^2= 0\}$ satisfy $L\cdot K_s>0$. 
Now $\calC^\perp := \calC\cap K_s^\perp$ is the intersection of a rational quadratic cone with a rational hyperplane, thus either it is a rational quadratic cone, or it consists only of the single point at the origin. 
However, we know that $\calC^\perp$ contains no rational ray, so it must be reduced to a point, and we conclude that $L\cdot K_s> 0$ for every nonzero $L \in \calC$.
\end{proof}

The 9-dimensional quadratic cones of good and wonderful rays in $K_s^\perp$ we just constructed do not in general consist of de~Fernex negative classes. 
However, the wonderful de~Fernex negative rays constructed in \cite{CMR23} are uncollisions of nef rays in $K_s^\perp$, so they do belong to some of these 9-dimensional cones, which is the basis for the proof of Theorem \ref{thm:main-fminus}.

\begin{proof}[Proof of Theorem \ref{thm:main-fminus}]
	Proposition 17 in \cite{CMR23} exhibits de~Fernex negative wonderful rays $\langle D\rangle$ on  {$X_{k^2+4}$ for every $k\ge 3$ by uncolliding classes $L$ on $\Qperp[2k+4]$ (use $n=k-2$ in \cite[Proposition 17]{CMR23}).
	By Proposition \ref{pro:subcones-Kperp}, for every such $L$ there is a rational 9-dimensional quadratic cone $\calC$ of nef classes in $\Qperp[2k+4]$ containg $L$.
	Therefore, by Proposition \ref{pro:uncoll_kperp}, the uncollision $\Uncoll_{k-1}(L,1)$} spans a good ray for every $L$ in $\calC$; these rays cover the 9-dimensional cone over a 8-dimensional sphere and at least one such ray is de~Fernex negative. 
	Since being de~Fernex negative is an open condition, the claim follows.	
\end{proof}
}

\end{document}